\newlength{\originalbase}
\begin{document}
\newcounter{thmenumerate}
\newenvironment{thmenumerate}
{\setcounter{thmenumerate}{0}%
 \renewcommand{\thethmenumerate}{\textup{(\roman{thmenumerate})}}%
 \def\item{\par
 \refstepcounter{thmenumerate}\textup{(\roman{thmenumerate})\enspace}}
}

\newtheorem{theorem}{Theorem}[section]
\newtheorem{claim}{Claim}[theorem]
\newtheorem{prop}[theorem]{Proposition}
\newtheorem{remark}[theorem]{Remark}
\newtheorem{lemma}[theorem]{Lemma}
\newtheorem{corollary}[theorem]{Corollary}
\newtheorem{conjecture}[theorem]{Conjecture}
\newtheorem{example}[theorem]{Example}
\newcommand{\restr}{{\upharpoonright}}
\newcommand{\sm}{{\setminus}}
\newcommand{\E}{{\mathbb E}}
\newcommand{\N}{{\mathbb N}}
\newcommand{\R}{{\mathbb R}}
\newcommand{\Z}{{\mathbb Z}}
\newcommand{\bone}{{\mathbb 1}}
\newcommand{\cA}{{\mathcal A}}
\newcommand{\cB}{{\mathcal B}}
\newcommand{\cC}{{\mathcal C}}
\newcommand{\cE}{{\mathcal E}}
\newcommand{\cF}{{\mathcal F}}
\newcommand{\cG}{{\mathcal G}}
\newcommand{\cH}{{\mathcal H}}
\newcommand{\cK}{{\mathcal K}}
\newcommand{\cM}{{\mathcal M}}
\newcommand{\cP}{{\mathcal P}}
\newcommand{\cT}{{\mathcal T}}
\newcommand{\cW}{{\mathcal W}}
\newcommand{\cZ}{{\mathcal Z}}
\newcommand{\Prob}{{\mathbb P}}
\newcommand{\eps}{{\varepsilon}}
\renewcommand\P{\operatorname{\mathbb P{}}}

\title{Order-invariant Measures on Fixed Causal Sets}

\author{Graham Brightwell}
\address{Department of
Mathematics, London School of Economics and Political Science,
Houghton Street, London WC2A 2AE}
\email{g.r.brightwell@lse.ac.uk}

\author{Malwina Luczak}
\address{School of Mathematics and Statistics, University of Sheffield,
Hicks Building, Hounsfield Road, Sheffield S3 7RH}
\email{m.luczak@sheffield.ac.uk}

\thanks{The research of Malwina Luczak was carried out at the London School
of Economics, and was supported in part by a grant from STICERD}

\keywords{causal sets, infinite posets, random linear extensions}
\subjclass[2000]{06A07,60C05}

\begin{abstract}
A causal set is a countably infinite poset in which every element is above
finitely many others; causal sets are exactly the posets that have a
linear extension with the order-type of the natural numbers -- we call
such a linear extension a {\em natural extension}.   We study probability
measures on the set of natural extensions of a causal set, especially
those measures having the property of {\em order-invariance}: if we
condition on the set of the bottom $k$ elements of the natural extension,
each feasible ordering among these $k$ elements is equally likely.  We
give sufficient conditions for the existence and uniqueness of an
order-invariant measure on the set of natural extensions of a causal set.
\end{abstract}

\maketitle

\section {Introduction}

For a finite partially ordered set (poset) $P=(X,<)$, a
{\em linear extension} of $P$ is a linear order on $X$ extending the
partial order $<$.  The notion of a uniform random linear extension of $P$
arises in a number of contexts, see for instance~\cite{Bri3,Wink},
enabling meaning to be given to the probability that $x$ is below $y$,
when $x$ and $y$ are incomparable.

We pick out one property possessed by the uniform measure in the finite
case.  A {\em down-set} in a poset $P=(X,<)$ is a subset $D$ of $X$ such 
that, if $x\in D$ and $y<x$, then $y \in D$.  For $A$ a down-set in $P$ of 
size $k$, if we consider any linear extension of $P$ in which the bottom 
$k$ elements are the elements of $A$, then the order on these elements is a 
linear extension of the poset $P_A$ induced by $P$ on $A$.  It is easy to 
see that, under the uniform probability measure, if we condition on the 
event that the bottom $k$ elements are those in $A$, then each linear 
extension of $P_A$ is equally likely.

Our aim in this paper is to initiate study of the case where $P$ is
countably infinite, imposing the property above -- which we shall call
{\em order-invariance} -- as an axiom.  This condition, enabling a passage
from the finite to the infinite, is hopefully reminiscent of the notion of
a {\em Gibbs measure} from statistical physics.

As we shall see, depending on $P$, there may be one, many, or no
order-invariant probability measures on the set of linear extensions of
$P$ (or ``on $P$'', for short).  Our results include sufficient conditions
for the existence of an order-invariant measure on $P$, and sufficient
conditions for uniqueness.  We also give a number of examples, including
one class of posets -- the downward-branching trees $T$ -- for which we
give a surprisingly subtle answer to the question of when there is
an order-invariant measure on $T$.

Our need to be able to discuss the ``bottom $k$ elements'' in a linear
extension of $P$ leads us to restrict the class of countable posets we
deal with, and also the class of their linear extensions.

A {\em causal set} is a countably infinite partially ordered set $P=(Z,<)$
such that every element is above only finitely many others.  A causal set
is exactly a poset that has a linear extension with the order-type of $\N$,
i.e., a bijection $\lambda: \N \to Z$ such that we never have $i<j$ and
$\lambda(i) > \lambda(j)$.  We call such a linear extension of a countable
poset a {\em natural extension}.

A probability measure on the set of natural extensions of a causal set $P$
is {\em order-invariant} if, for each $k \in \N$ and each $k$-element
down-set $A$ of $P$, conditioned on the event that
$\{\lambda(1), \dots, \lambda(k)\} = A$, each linear extension of $P_A$ is
equally likely to be the restriction of $\lambda$ to $[k]$.  

In this paper, all our measures will be probability measures, although we 
often omit explicit mention of this; for instance, we will write 
``order-invariant measure'' instead of ``order-invariant probability measure''. 

We give a simple example, to illustrate the definitions and to show that
there are posets $P$ with more than one order-invariant measure on $P$.

\bigbreak

\noindent{\bf Example 1}. \quad 
Let $P$ be the causal set made up of the disjoint union of two infinite
chains $B: b_1<b_2<\cdots$ and $C:c_1<c_2<\cdots$.  Not every linear
extension of $P$ is a natural extension: for instance
$b_1<b_2< \cdots < c_1<c_2< \cdots$ is a linear extension that does not
have the order-type of $\N$.

We shall consider natural extensions of $P$ as constructed ``from the
bottom up''.  At each stage, after we have selected the lowest $k$
elements $x_1,x_2,\dots,x_k$ of the linear extension, the next element
$x_{k+1}$ must be a minimal element among those not yet selected, and
there will always be exactly two candidates, one in $B$ and one in $C$.
To prescribe how to generate a ``random linear extension'' of $P$, we need
to give a probabilistic rule stating how to choose between these two
elements.

Given a parameter $q \in [0,1]$, one such rule is ``always choose the
minimal remaining element of $B$ with probability $q$, and the minimal
remaining element of $C$ with probability $1-q$''.  This rule gives us a
probability measure $\mu_q$ on the set of natural extensions of $P$
(equipped with a $\sigma$-field that we shall specify later).

To see that $\mu_q$ is order-invariant, consider any $k$-element down-set
$A$ of $P$, so $A = \{b_1, \dots, b_\ell, c_1, \dots, c_{k-\ell}\}$ for
some $\ell$.  For any of the $\binom{k}{\ell}$ linear extensions
$a_1<\cdots< a_k$ of $P_A$, the a priori probability that the random
linear extension ``starts'' $a_1<\cdots<a_k$ is equal to
$q^\ell (1-q)^{k-\ell}$.  Thus, conditioned on the bottom $k$ elements
being the elements of $A$, each of the $\binom{k}{\ell}$ linear
extensions of $P_A$ is equally likely to be the order among the elements
of $A$.

Thus we have an uncountable family of order-invariant measures on $P$.

An order-invariant measure on $P$ is said to be {\em extremal} if it
cannot be expressed as a convex combination of two other order-invariant
measures on $P$.  We shall return to this example later and show that the
$\mu_q$ are the only extremal order-invariant measures on $P$.  All other
order-invariant measures can be constructed according to a two-stage rule:
first choose $q$ according to some probability distribution on $[0,1]$,
then choose the linear extension according to $\mu_q$.

\bigbreak

This work is part of a wider project, initiated in our companion
paper~\cite{BL1}.  In that paper, we consider probability measures where
the causal set $P$ is also random.  More precisely, we consider processes
that generate a causal set one element at a time, at each stage adding a
maximal element, with a label drawn from a given set (which we take to be
the interval $[0,1]$), and putting the new element above some down-set in
the current poset.  Such processes are called {\em causal set processes}:
formally they are Markov processes, whose states are pairs
$(x_1\cdots x_k, <^{[k]})$, where $x_1\cdots x_k$ is a string of elements
from $[0,1]$, and $<^{[k]}$ is a partial order on the index set $[k]$
that is a suborder of the natural order on $[k]$.  Each state corresponds
to a partial order $P_k$ on the set $X_k = \{x_1, \dots, x_k\}$ -- given
by $x_i < x_j$ if and only if $i <^{[k]} j$ -- together with a linear
extension of $P_k$.

Let us indicate, fairly precisely, how a probability measure on a fixed
causal set $P=(Z,<)$, with $Z \subset [0,1]$, fits into this framework.
Consider a causal set process where the only allowed transitions are to
states $(x_1\cdots x_k, <^{[k]})$, where $X_k= \{x_1, \dots, x_k\}$ is a
finite down-set in $P$, and $i <^{[k]} j$ if and only if $x_i<x_j$.  In
other words, the derived poset $P_k$ is the restriction of $P$ to $X_k$.
Effectively, a transition always adds a minimal element $x_{k+1}$ of
$P\sm X_k$ to the end of the string $x_1\cdots x_k$, and augments the
poset $<^{[k]}$ according to which elements of $X_k$ are below $x_{k+1}$
in $P$.  In such a process, the order $<^{[k]}$ can be derived from the
string $x_1\cdots x_k$ and the causal set $P$, and so it can be omitted
from the notation.  A sample path of the process gives rise to an
infinite string $x_1x_2\cdots$ of elements of $Z$: if it happens that
$X = \{ x_1,x_2,\dots \} = Z$, then this will be a natural extension
of~$P$.

A consequence of the main result of~\cite{BL1} is that, to classify the
extremal order-invariant measures in this broader setting, it is enough
to classify the extremal order-invariant measures on fixed causal sets.
However, that is likely to be a prohibitively difficult task: giving
conditions for existence and/or uniqueness of order-invariant measures on
a fixed $P$ is a more realistic goal.


Besides the inherent interest, another motivation for studying
order-invariant measures comes from physics, in the context of a proposal
for a random causal set as a mathematical model of space-time.  Rideout
and Sorkin~\cite{RS} gave various desirable conditions for
such a model, including order-invariance.  Although the proposed list of
conditions turns out to be too narrow to include causal sets resembling
the observed space-time universe (see~\cite{BG}), we are led to ask
whether order-invariance itself is an obstacle: we return to this in the
Open Problems at the end of the paper.

We mention some other connections with earlier work.

Some years ago, the first author~\cite{Bri1,Bri2} studied random linear
extensions of locally finite posets.  The main theorem of~\cite{Bri1},
interpreted in the present context, is as follows.  If a causal set $P$
has the property that, for some fixed $k$, every element is incomparable
with at most $k$ others, then there is a unique order-invariant measure on
$P$.  The interpretation is spelled out in Theorem~\ref{thm:unique} of the
present paper.

The specific case where the causal set is the two-dimensional grid
$G= (\N \times \N,<)$ has attracted attention from another direction, as
it is connected with the representation theory of the infinite symmetric
group, and with harmonic functions on the Young lattice (which is the
lattice of down-sets of $G$).  A good account of this theory appears in
Kerov~\cite{Kerov}, where a somewhat more general theory is also
developed.  Our concerns in this paper are rather different, but the two
theories have various points of contact.

The case of order-invariant measures on a fixed causal set $P$ can also be
viewed as a (1-dimensional) spin system.  There are (at least) two ways to
do this: either we can treat the elements as particles, with the spin of
an element $z$ encoding its rank $\lambda^{-1}(z)$ in a natural extension
$\lambda$, or we can treat the pairs of incomparable elements as
particles, with the spin of a pair determining which is higher in the
natural extension.  Thus some of the general results discussed in, for
instance, Bovier~\cite{Bovier} or Georgii~\cite{Georgii} apply.  (Indeed,
some of the results in~\cite{Georgii} hold also for general
order-invariant measures, as is explained in~\cite{BL1}.)

The structure of the paper is as follows.  Basic definitions and notation
connected with causal sets and natural extensions are given in
Section~\ref{sec:csne}.  In Section~\ref{sec:pfcs}, we give a full
specification of the probability spaces we work in, and of the notion of
order-invariance.  Section~\ref{sec:two-ex} is devoted to a simple example
worked out in some detail.  In Section~\ref{sec:extremal}, we state a
consequence of a result from~\cite{BL1}, giving different
characterisations of extremal order-invariant measures.

Our formal definition of order-invariance includes processes that are not
natural extensions of $P$, but instead are natural extensions of the
restriction $P_Y$ of $P$ to some infinite down-set in $P$.  An
order-invariant measure that does a.s.\ give a natural extension of $P$ is
called {\em faithful}, and we investigate this concept in
Section~\ref{sec:fnfp}.

As we have mentioned, we are particularly interested in the following two
questions.  For which causal sets $P$ is there an order-invariant measure
on $P$?  For which causal sets $P$ is there a unique order-invariant
measure on $P$?  In Section~\ref{sec:existence}, we show that any causal
set $P$ with no infinite antichain admits an order-invariant measure.  In
Section~\ref{sec:uoim}, we show that, for any causal set $P$ where there
is a uniform bound $k$ on the number of elements incomparable with an
element $x$, there is just one order-invariant measure on $P$.  As
mentioned above, this is a simple application of the main result
of~\cite{Bri1}.

These conditions for existence and uniqueness are far from necessary, and
in particular it seems that any description of which causal sets
admit an order-invariant measure must be significantly more complicated.
In Section~\ref{sec:dbt}, we show that a downward-branching tree $T$
admits an order-invariant measure if and only if a certain series of
numbers derived from $T$ is convergent.

In Section~\ref{sec:grid}, we briefly discuss the case of the
two-dimensional grid poset studied by Kerov~\cite{Kerov} and others.

One question that we have not answered is the one that originally
motivated this research: is there an order-invariant process that gives
rise to causal sets resembling discrete approximations to the space-time
structure of the universe?  This and other open problems are discussed in
Section~\ref{sec:op}.

\section {Causal Sets and Natural Extensions} \label{sec:csne}

A {\em (labelled) poset} $P$ is a pair $(Z,<)$, where $Z$ is a set (for
us, $Z$ will always be countable), and $<$ is a partial order on $Z$.  A
{\em total order} or {\em linear order} on $Z$ is a poset such that each
pair of elements of $Z$ is comparable.

A {\em down-set} in $P$ is a subset $Y \subseteq Z$ such that, if $a\in Y$
and $b<a$, then $b \in Y$.  A {\em stem} is a finite down-set (this term
is less standard: it has been used in some physics papers).  An
{\em up-set} is the complement of a down-set.

If $P=(Z,<)$ is a poset, and $Y \subseteq Z$, then $<_Y$ denotes the
restriction of the partial order to $Y$, and $P_Y=(Y,<_Y)$.  For
$W \subset Z$, we also write $P\sm W$ to mean $P_{Z\sm W}$.

A pair $(x,y)$ of elements of $Z$ is a {\em covering pair} if $x<y$, and
there is no $z\in Z$ with $x<z<y$.

For a poset $P=(Z,<)$ and an element $x\in Z$, set
$D(x)=\{ y \in Z: y<x\}$, $U(x) = \{ y \in Z : y > x\}$ and let $I(x)$ be
the set of elements incomparable with $x$.  We also define
$D[x] = D(x) \cup \{ x\}$ and $U[x] = U(x) \cup \{ x\}$.

Let $P=(Z,<)$ be a poset on a countably infinite set $Z$.  We say that $P$
is a {\em causal set} (or {\em causet}) if $D(z)$ is finite for each
$z \in Z$.

A {\em linear extension} of a poset $P=(Z,<)$ is a total order $\prec$ on
$Z$ such that, whenever $x<y$, we also have $x \prec y$.

The sets $\N$ and $[k]=\{1,\dots, k\}$, for $k \in \N$, come equipped with 
a ``standard'' linear order.  In these cases, a {\em suborder} of $\N$ or
$[k]$ will be a partial order on that ground-set (typically denoted $<^\N$
or $<^{[k]}$) with the standard order as a linear extension, i.e., if
$<^\N$ is a suborder of $\N$ and $i<^\N j$, then $i$ is below $j$ in the
standard order on $\N$.

A {\em natural extension} of a causal set $P=(Z,<)$
is a bijection $\lambda$ from $\N$ to $Z$ such that
$\lambda^{-1}$ is order-preserving: i.e., if $\lambda(i) < \lambda(j)$,
then $i<j$.  We shall often write natural extensions as $x_1x_2 \cdots$,
meaning that $\lambda(i) =x_i$.  In this notation, an
{\em initial segment} of $\lambda$ is an initial substring
$x_1x_2 \cdots x_k$, for some $k \in \N$.

A natural extension $\lambda$ of $P=(Z,<)$ gives rise to a linear
extension $\prec$ by setting $x \prec y$ whenever
$\lambda^{-1}(x) < \lambda^{-1}(y)$.  The linear extensions arising in
this way are those with the order-type of $\N$.

Similarly, if $P=(Z,<)$ is a finite poset, with $|Z|=k$, we can think of a
linear extension as a bijection $\lambda: [k] \to Z$ such that
$\lambda^{-1}$ is order-preserving, i.e., if $\lambda(i) < \lambda(j)$,
then $i<j$ in~$[k]$.  We shall sometimes write a linear extension of
$P$ as a string $x_1\cdots x_k$, meaning that $\lambda(i) = x_i$ for
$i=1,\dots,k$: in this sense, we can again talk of an initial segment
of a linear extension.  For finite partial orders, we shall use these
various equivalent notions of linear extension interchangeably.  For a
finite poset $P$, let $e(P)$ denote the number of linear extensions
of~$P$.

An {\em ordered stem} of a causal set, or a finite poset, $P=(Z,<)$, is
a finite string $x_1\cdots x_k$ such that $X=\{x_1, \dots, x_k\}$ is a
down-set in $P$, and $x_1\cdots x_k$ is a linear extension of $P_X$.
Ordered stems of a causal set (finite poset) $P$ are exactly the strings
that can arise as an initial segment of a natural (linear) extension
of~$P$.

For a causal set or finite poset $P$, and an ordered stem $x_1\cdots x_k$
of $P$, let $E^P(x_1\cdots x_k)$ denote the set of natural/linear
extensions of $P$ with initial segment $x_1\cdots x_k$.  When there is
only one poset $P$ under consideration, we shall use the simpler notation
$E(x_1 \cdots x_k)$ instead.

For a causal set $P$, let $L(P)$ denote the set of natural extensions
of $P$.  Also, let $L'(P)$ denote the set of injections $\lambda$ from
$\N$ to $P$ such that, for each $i$,
$D(\lambda(i)) \subseteq \{\lambda(1), \dots, \lambda(i-1)\}$.  In
general, elements of $L'(P)$ need not be bijections: those elements of
$L'(P)$ that are bijections are exactly the natural extensions.

The following statements are all very straightforward to verify.
A countable poset has a natural extension if and only if every element is
above finitely many elements, i.e., if and only if it is a causal set. If
a causal set $P$ has no element $x$ with $I(x)$ infinite, then all linear
extensions of $P$ are natural extensions, and $L(P)=L'(P)$.  However, if
there is an element $x$ of $P$ with $I(x)$ infinite, then there is
(a)~a linear extension of $P$ that does not have the order-type of $\N$
and (b)~an element of $L'(P)$ whose image is the proper subset
$I(x) \cup D(x)$ of~$P$.

\section {Order-invariant Processes on Fixed Causal Sets} \label{sec:pfcs}

Consider a fixed causal set $P=(Z,<)$, with $Z$ a countable subset of
$[0,1]$.  (The actual nature of the set $Z$ is not crucial; we demand that
the labels of our posets are taken from $[0,1]$ only in order to
incorporate the structures studied in this paper within the general
framework of~\cite{BL1}.)

For $k$ a non-negative integer, let $\cE_P^{[k]}$ denote the set of
ordered stems of $P$ with $k$ elements.  Let $\cE_P$ be the union of the
$\cE_P^{[k]}$, i.e., the set of all ordered stems of $P$.

A {\em causet process on $P$} is a discrete-time Markov chain with state
space $\cE_P$, such that the only allowed transitions from a state
$x_1\cdots x_k \in \cE_P^{[k]}$ are those to a state
$x_1\cdots x_kx_{k+1} \in \cE_P^{[k\!+\!1]}$, where $x_{k+1}$ is a minimal
element of $P \sm X_k$, where $X_k = \{ x_1, \dots, x_k\}$.

Sample paths of a causet process on $P$, starting from the empty ordered
stem, correspond to natural extensions $x_1x_2 \cdots$ of some restriction
$P_X$ to an infinite down-set $X = \{ x_1,x_2,\dots\}$ of $P$.  Indeed,
given a natural extension $x_1x_2 \cdots$, its finite initial segments
form a possible sample path of a causet process on $P$.  It is thus
natural to work with a sample space whose elements are these natural
extensions.

Accordingly, for a causal set $P$, we define $\Omega^P$ to be the set of
infinite strings $\omega = x_1x_2\cdots$ that are natural extensions of
$P_X$ for some down-set $X = \{ x_1, x_2, \dots\}$ in $P$.  Equivalently,
$\Omega^P$ is the set of strings $\omega = x_1x_2\cdots$ such that, for
each $k\in \N$, $x_k$ is a minimal element of
$P\sm X_{k-1}$, where $X_{k-1} = \{ x_1, \dots, x_{k-1}\}$.

For $a_1a_2 \cdots a_k$ an ordered stem of $P$, we define
$E(a_1\cdots a_k) = E^P(a_1\cdots a_k)$ to be the set of elements of
$\Omega^P$ with $a_1\cdots a_k$ as an initial segment.  In other words,
$$
E(a_1\cdots a_k) = \{ \omega=x_1x_2\cdots \in \Omega^P :
x_1=a_1,\dots,x_k=a_k\}.
$$
A set of this form is called a {\em basic event} (for $P$).

For fixed $k$, let $\cF^P_k$ be the $\sigma$-field generated by the
events $E(a_1\cdots a_k)$, for $a_1\cdots a_k$ an ordered stem of
length $k$.  Also, let $\cF^P$ be the $\sigma$-field generated by the
union of the $\cF^P_k$.

A {\em causet measure} on $P$ is a probability measure on $(\Omega^P,\cF^P)$.

A {\em separating class} in $(\Omega^P,\cF^P)$ is a subset $\cH$ of
$\cF^P$ such that, if two probability measures agree on $\cH$, then they
are equal.  For any causal set $P$, the collection of basic events
$E(a_1\cdots a_k)$, for $a_1\cdots a_k$ an ordered stem of $P$, forms a
separating class.

The sequence $(\cF_k^P)$ is the natural filtration for a causet process on
$P$.  The measure $\mu$ of a causet process on $P$ is determined by the
finite-dimensional distributions of the Markov process, i.e., by its
values on the sets $E(a_1\cdots a_k)$.

We can equip $\Omega^P$ with a metric in several natural ways, many of
which lead to equivalent topologies.  For instance we can define the
metric by
$$
d(x_1x_2\cdots, y_1y_2\cdots) =
\sum_{i=1}^\infty 2^{-i} \bone(x_i \not= y_i).
$$

\begin{theorem} \label{thm:compact}
Let $P=(Z,<)$ be a causal set.  The space $\Omega^P$, with the metric
above, is compact if and only if, for all stems $A$ of $P$, $P\sm A$ has
finitely many minimal elements.
\end{theorem}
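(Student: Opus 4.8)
The plan is to identify $\Omega^P$ with the set of infinite branches of a rooted tree and then run a König's-lemma-type argument. Let $\cT$ be the tree whose vertices at level $k$ are the ordered stems in $\cE_P^{[k]}$, with $x_1\cdots x_k$ joined to $x_1\cdots x_kx_{k+1}$ whenever the latter is again an ordered stem, and with root the empty stem. By the very definition of $\Omega^P$, its elements are exactly the infinite branches of $\cT$, and for a branch through $x_1\cdots x_k$ the allowed continuations $x_{k+1}$ are precisely the minimal elements of $P\sm X_k$, where $X_k=\{x_1,\dots,x_k\}$.

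The first preliminary observation I would record is that every ordered stem $x_1\cdots x_k$ of $P$ is an initial segment of some element of $\Omega^P$; equivalently, every vertex of $\cT$ lies on an infinite branch. This holds because in a causal set every nonempty set of the form $P\sm A$, with $A$ a stem, has a minimal element (given $z\in P\sm A$, a minimal element of the finite nonempty set $D[z]\sm A$ is minimal in all of $P\sm A$), so since $P$ is infinite one can keep extending any ordered stem indefinitely. Consequently the children of the vertex $x_1\cdots x_k$ in $\cT$ are in bijection with the minimal elements of $P\sm X_k$, and as $x_1\cdots x_k$ ranges over all ordered stems, $X_k$ ranges over all stems $A$ of $P$. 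Thus the hypothesis ``$P\sm A$ has finitely many minimal elements for every stem $A$'' is precisely the assertion that every vertex of $\cT$ has finitely many children.

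Since $\Omega^P$ is a metric space, compactness is equivalent to sequential compactness, and I would prove each implication in those terms. For the ``if'' direction, assume $\cT$ is finitely branching and let $(\omega^{(n)})$ be a sequence in $\Omega^P$. I extract nested subsequences: the root has finitely many children, so some fixed $a_1$ occurs as first letter infinitely often; among those, the vertex $a_1$ has finitely many children, so some fixed $a_2$ occurs as second letter infinitely often; and so on. A diagonal subsequence then converges coordinatewise to $\omega=a_1a_2\cdots$; each $a_1\cdots a_k$ is an ordered stem, so $\omega\in\Omega^P$, and $d(\omega^{(n_j)},\omega)\le 2^{-k}$ once $j\ge k$, giving sequential compactness. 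For the ``only if'' direction I argue by contraposition: if some stem $A$ has infinitely many minimal elements $y_1,y_2,\dots$ in $P\sm A$, fix an ordered stem $a_1\cdots a_k$ with $\{a_1,\dots,a_k\}=A$, note that each $a_1\cdots a_ky_n$ is an ordered stem, and extend it (using the preliminary observation) to some $\omega^{(n)}\in\Omega^P$; then $d(\omega^{(m)},\omega^{(n)})\ge 2^{-(k+1)}$ for $m\ne n$, so $(\omega^{(n)})$ has no convergent subsequence and $\Omega^P$ is not compact.

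Once the tree picture is set up the argument is essentially bookkeeping; the only point requiring genuine care is the preliminary observation that every ordered stem extends to a full element of $\Omega^P$, so that the number of children of a vertex of $\cT$ really equals the number of minimal elements of the corresponding $P\sm A$ with no spurious ``dead ends'', together with the routine check that the diagonal limit in the ``if'' direction satisfies the defining constraints of $\Omega^P$.
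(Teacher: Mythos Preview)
Your argument is correct and follows essentially the same route as the paper: a K\"onig's-lemma/pigeonhole extraction for the ``if'' direction, and an explicit discrete sequence for the ``only if'' direction. The only cosmetic difference is in the converse: rather than invoking your preliminary observation to extend each $a_1\cdots a_k y_n$ abstractly, the paper enumerates the infinite set of minimal elements as $b_1,b_2,\dots$ and writes the extensions down explicitly as $\omega^i=a_1\cdots a_k b_i b_{i+1}b_{i+2}\cdots$, which yields $d(\omega^i,\omega^j)=2^{-k}$ exactly.
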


If $P$ has no infinite antichain, then the condition above is satisfied,
since the set of minimal elements of $P\sm A$, for any stem $A$, is an
antichain.  However, the condition in the theorem is weaker: consider a
chain $a_1<a_2< \cdots$, with incomparable infinite chains placed above
each $a_i$.  This poset has an infinite antichain, but deleting any stem
leaves a causal set with finitely many minimal elements.

\begin{proof}
Suppose first that, for each stem $A$ of $P$, $P\sm A$ has finitely many
minimal elements.  Consider any sequence $(\omega^m)$ of elements of
$\Omega^P$.  We show that there is a convergent subsequence
$(\omega^{m_j})$ of $(\omega^m)$.  The argument is very standard.

We construct an element $\omega^0 = a_1a_2\cdots $ of $\Omega^P$ with the
property that, for each $j \in \N$, the ordered stem $a_1\cdots a_j$ is an
initial segment of infinitely many of the $\omega^m$.  Once we have done
this, the result follows: for each $j$ in turn, we choose $m_j > m_{j-1}$
so that $\omega^{m_j}$ has $a_1\dots a_j$ as an initial segment -- now the
subsequence $(\omega^{m_j})$ converges to $\omega^0$.

We construct $\omega^0$ recursively.  For $j \ge 0$, suppose that
$a_1 \cdots a_j$ is an ordered stem in $P$ that is an initial segment of
an infinite set $\{ \omega^{m_1},\omega^{m_2}, \dots\}$ of the elements
$\omega^m$.  Now the set $B$ of minimal elements of
$P \sm \{ a_1, \dots, a_j\}$ is finite.  Moreover, the next entry of each
of the $\omega^{m_i}$ is an element of $B$, so some element $a_{j+1}$ of
$B$ occurs infinitely often as the next element in $\omega^{m_i}$, and
hence the ordered stem $a_1\cdots a_ja_{j+1}$ occurs infinitely often as
an initial segment.  Proceeding in this way, we may construct a suitable
$\omega^0$.

Conversely, suppose that there is a stem $A$ of $P$ such that the set $M$
of minimal elements of $P\sm A$ is infinite.  We take some enumeration
$b_1,b_2,\cdots $ of $M$, and any linear extension $a_1\cdots a_k$ of
$P_A$, and define $\omega^i = a_1\cdots a_kb_ib_{i+1}b_{i+2} \cdots$, for
$i \in \N$.  We see that each string $\omega^i$ is in $\Omega^P$, and that
$d(\omega^i, \omega^j) = \sum_{\ell = k+1}^\infty 2^{-\ell} = 2^{-k}$
whenever $i\not= j$.  Therefore the sequence $(\omega^i)$ of elements of
$\Omega^P$ does not have a convergent subsequence, and so the space
$\Omega^P$ is not compact.
\end{proof}

We need some notation for functions on $\Omega^P$, i.e., random elements on
our probability space.  If $\omega = x_1x_2\cdots$, then we set
$\xi_j(\omega) = x_j$, $\Xi_j(\omega) = \{ x_1, \dots, x_j\}$, and
$\Xi(\omega) = \{x_1, x_2, \dots\}$.

We say that a causet measure $\mu$ on $P$ is {\em order-invariant} if,
whenever $A = \{ a_1, \dots, a_k\}$ is a stem of $P$, and $s$ is a
permutation of $[k]$ such that both $a_1a_2\cdots a_k$ and
$a_{s(1)}a_{s(2)}\cdots a_{s(k)}$ are linear extensions of $P_A$, then
\begin{equation} \label{eq:o-i}
\mu(E(a_1\cdots a_k)) = \mu(E(a_{s(1)}\cdots a_{s(k)})).
\end{equation}
We say that a causet process on a causal set $P$ is order-invariant if the
corresponding causet measure is order-invariant.

We can rephrase the condition of order-invariance in several different
ways.

For $A = \{ a_1, \dots, a_k\}$ a stem of $P$, let $\nu^A$ denote the
uniform measure on linear extensions of the finite poset $P_A$.  Then the
causet measure $\mu$ on $P$ is order-invariant if and only if, for every
stem $A = \{a_1, \dots, a_k\}$ of $P$, and every linear extension
$a_1\cdots a_k$ of $P_A$,
\begin{equation}\label{uniform}
\mu(E^P(a_1\cdots a_k)\mid \Xi_k = A) = \nu^A(\{a_1\cdots a_k\}) =
\frac{1}{e(P_A)}.
\end{equation}

More generally, if $\mu$ is an order-invariant measure on $P$, $A$ is a
stem of $P$ of size $k$, $\ell$ is a natural number with $\ell \le k$, 
and $a_1\cdots a_\ell$ is an ordered stem whose elements are all in $A$, 
then
\begin{equation} \label{uniform2}
\mu(E^P(a_1\cdots a_\ell) \mid \Xi_k = A) =
\nu^A(E^{P_A}(a_1\cdots a_\ell)).
\end{equation}
This identity is obtained by summing (\ref{uniform}) over the elements
of $E^{P_A}(a_1\cdots a_\ell)$.

There is a strong similarity between order-invariance and the
{\em Gibbs measure} condition from statistical physics: if we take any
finite patch of a space, and condition on the configuration outside that
patch (here, that means conditioning on the event that the set $\Xi_k$ of
the first $k$ elements -- i.e., those not accounted for outside the patch
-- is equal to a given set $A$), then all legal extensions of the
configuration into the patch (here, all linear extensions of the order
restricted to $A$) are equally likely (or, more generally, have some
specified relative probabilities).  See Georgii~\cite{Georgii} or
Bovier~\cite{Bovier} for a very general treatment of Gibbs measures.

To check order-invariance, it is enough to verify condition~(\ref{eq:o-i})
above when $s$ is an adjacent transposition, and the two transposed
elements are incomparable.  This is an easy consequence of the fact
that it is possible to step between any two linear extensions of a finite
poset by exchanges of adjacent incomparable elements.

A causet process on $P$ is {\em order-Markov} if the transition
probabilities out of a state $x_1\cdots x_k \in \cE_P$ depend only on the
set $X_k = \{x_1\cdots x_k\}$, and not on the order of the elements.
A causet measure $\mu$ on $P$ is order-Markov if its associated process is:
this means that
$$
\frac{\mu(E(a_1\cdots a_kb))}{\mu(E(a_1\cdots a_k))} =
\frac{\mu(E(a_{s(1)}\cdots a_{s(k)}b))}{\mu(E(a_{s(1)}\cdots a_{s(k)}))},
$$
whenever $a_1\cdots a_k$ and $a_{s(1)}\cdots a_{s(k)}$ are ordered stems
of $P$, $s$ is a permutation of $[k]$, $\mu(E(a_1\cdots a_k)) > 0$, and
$b$ is a minimal element of $P\sm \{a_1,\dots,a_k\}$.

If $\mu$ is an order-invariant measure on $P$, then it is also
order-Markov, as the numerators and denominators above are equal.  The
converse is far from true: as an extreme example, consider a causet measure
$\mu_{x_1x_2\cdots}$ on a causal set $P$ where the probability of one
specified natural extension $x_1x_2\cdots$ of $P$ is~1: this measure
$\mu_{x_1x_2\cdots}$ is trivially order-Markov, but not order-invariant
unless $x_1x_2 \cdots$ forms a chain.

However, if we know that a causet measure $\mu$ arises from an
order-Markov process, then in order to check order-invariance, it is
enough to verify that~(\ref{eq:o-i}) holds when $s$ is the permutation
exchanging the {\em last} two incomparable elements: if this holds, then
the order-Markov condition implies that~(\ref{eq:o-i}) holds whenever $s$
is an exchange of any pair of incomparable elements, and we have already
remarked that this suffices for order-invariance.  We shall make use of
this later.

We next give an easy but useful lemma, telling us what conditions need to
be checked to ensure that a given specification of values
$\mu(E(a_1\cdots a_k))$ defines a measure on $(\Omega^P, \cF^P)$, for a
given causal set~$P$.

\begin{lemma} \label{to-check}
Let $P=(Z,<)$ be a causal set, and let $f$ be a function from the set of
ordered stems of $P$ to $[0,1]$.  Setting
$\mu(E(a_1\cdots a_k)) = f(E(a_1\cdots a_k))$ defines a measure on
$(\Omega^P,\cF^P)$ if and only if the following hold:
\begin{itemize}
\item[(i)] $f(\phi) = 1$, where $\phi$ denotes the empty string,
\item[(ii)] for each ordered stem $a_1\cdots a_k$, we have
$$
\sum _b f(a_1\cdots a_kb) = f(a_1\cdots a_k),
$$
where the sum runs over all minimal elements $b$ of
$P\sm \{a_1,\dots, a_k\}$.
\end{itemize}
\end{lemma}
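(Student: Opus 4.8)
The plan is to recognize this as an instance of the Kolmogorov extension theorem, adapted to the tree-like index structure of ordered stems, and to check that conditions (i) and (ii) are exactly what is needed to invoke it. First I would observe that the necessity of (i) and (ii) is immediate: (i) says $\mu(\Omega^P) = 1$ since $E(\phi) = \Omega^P$, and (ii) is just countable (in fact finite, in each instance) additivity applied to the partition of the basic event $E(a_1\cdots a_k)$ into the basic events $E(a_1\cdots a_k b)$ as $b$ ranges over the minimal elements of $P\sm\{a_1,\dots,a_k\}$ — this is a partition precisely because every $\omega \in \Omega^P$ extending $a_1\cdots a_k$ has, as its $(k{+}1)$st entry, exactly one such minimal element.

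For sufficiency, the strategy is to define $\mu$ first as a finitely additive set function on the algebra $\cA$ generated by the basic events, and then to invoke a Carathéodory-type extension. The key point is that $\cA$ consists exactly of finite disjoint unions of basic events: every basic event $E(a_1\cdots a_k)$ is, by (ii) applied repeatedly, a finite disjoint union of basic events of any larger prescribed length that are compatible with it, so the collection of finite disjoint unions of basic events is closed under intersection and complementation. On $\cA$ I would set $\mu$ of a finite disjoint union to be the sum of the $f$-values of the pieces; condition (ii) guarantees this is well-defined (independent of how a set in $\cA$ is written as a disjoint union of basic events, by passing to a common refinement), and condition (i) gives $\mu(\Omega^P)=1$. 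Finite additivity on $\cA$ is then routine.

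The main obstacle — and the only genuinely non-trivial step — is upgrading finite additivity on $\cA$ to countable additivity, i.e.\ verifying continuity at $\emptyset$: if $(A_n)$ is a decreasing sequence of sets in $\cA$ with $\mu(A_n) \ge \delta > 0$ for all $n$, then $\bigcap_n A_n \ne \emptyset$. Here I would use a compactness/König's-lemma argument on the tree of ordered stems. Each $A_n \in \cA$ is a finite union of basic events; the ordered stems $a_1\cdots a_k$ with $E(a_1\cdots a_k)\subseteq A_n$ but $E(a_1\cdots a_{k-1})\not\subseteq A_n$ form a finite "antichain" in the tree of stems. Building the infinite path: at each level one shows, using $\mu(A_n)\ge\delta$ and condition (ii) (which makes the $f$-values behave like a consistent flow down the tree), that there is an ordered stem $b_1\cdots b_m$ with $\mu\bigl(E(b_1\cdots b_m)\cap A_n\bigr)$ bounded below uniformly in $n$ — the finite branching at each node (only finitely many minimal elements of $P\sm X_m$ contribute positive $f$-mass summing to $f(b_1\cdots b_m)$) lets one prune to a single child while keeping a uniform lower bound. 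Iterating produces $\omega = b_1b_2\cdots \in \Omega^P$ lying in every $A_n$. I would remark that this is exactly the standard argument showing that the measure extends along the compact-space structure; when $\Omega^P$ is compact (Theorem~\ref{thm:compact}) it is literally the statement that a finitely additive measure on the field of clopen sets extends, but the pruning argument works in general because the branching at each node is finite regardless of whether $\Omega^P$ itself is compact. Once countable additivity on $\cA$ is established, Carathéodory gives a measure on $\sigma(\cA) = \cF^P$ whose values on basic events are the prescribed $f(E(a_1\cdots a_k))$, and since the basic events form a separating class (as noted in the excerpt) this measure is the unique such.
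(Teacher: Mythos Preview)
Your overall plan (Kolmogorov/Carathéodory extension) is sound, but the execution contains a genuine error that breaks the argument as written. You assert that ``the branching at each node is finite regardless of whether $\Omega^P$ itself is compact'' and that ``only finitely many minimal elements of $P\sm X_m$ contribute positive $f$-mass''. This is false: for the causal set consisting of a countable disjoint union of infinite chains (discussed after Proposition~\ref{prop:flow}), the set of minimal elements of $P$ is infinite, and one may take $f$ assigning positive mass to every one of them. In fact, Theorem~\ref{thm:compact} says precisely that finite branching at every node is \emph{equivalent} to compactness of $\Omega^P$, so your parenthetical has it backwards. The same error infects your description of the algebra $\cA$: with infinite branching, a basic event $E(a_1\cdots a_k)$ is \emph{not} a finite disjoint union of basic events of length $k+1$, and the complement of a basic event is not in general a finite union of basic events, so ``finite disjoint unions of basic events'' is not an algebra.

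Your approach can be repaired. Work instead with the algebra of sets of the form $\pi_k^{-1}(S)$ for arbitrary $S\subseteq \cE_P^{[k]}$, and in the König step replace the finite-branching pigeonhole by a dominated-convergence argument: since $\mu(E(b_1\cdots b_m b)\cap A_n)$ is decreasing in $n$ and dominated by $f(b_1\cdots b_m b)$, with $\sum_b f(b_1\cdots b_m b)<\infty$, one may interchange limit and sum to find a child $b_{m+1}$ with $\lim_n \mu(E(b_1\cdots b_{m+1})\cap A_n)>0$. This does yield $\omega\in\bigcap_n A_n$.

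The paper's proof is far shorter and avoids all of this. It simply observes that the ratios $f(a_1\cdots a_k b)/f(a_1\cdots a_k)$ are non-negative and (by (ii)) sum to~1, hence define transition probabilities of a Markov chain on the countable state space $\cE_P$; the existence of a path-space measure for such a chain is then quoted as standard (Ionescu--Tulcea, or Kolmogorov for countable-state processes), and a telescoping product recovers $\mu(E(a_1\cdots a_k))=f(a_1\cdots a_k)$. Your route makes explicit the measure-theoretic content buried in that citation, which has some expository value, but at the cost of considerably more work and the pitfall you fell into.
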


The conditions of the lemma amount to Kolmogorov's consistency
conditions; see Chapter~8 in~\cite{GrSt}.
The proof is routine and omitted.

Thus, to check that $\mu$ is an order-invariant measure on a given causal
set $P$, we need to check~(i) (which is usually trivial) and (ii), and
also the order-invariance condition.

\section{An Example} \label{sec:two-ex}

In this section, we study one specific example in detail, both to illustrate
the definitions and themes of the paper and to provide an explicit (non-trivial)
example of a causal set $P$ such that there is exactly one order-invariant measure
on $P$.

\bigbreak

\noindent {\bf Example 2}. \quad
Figure~\ref{fig:ladder} below shows the Hasse diagram of a labelled causal
set $P=(Z,<)$, where $Z = \{ b_1, b_2, \dots\}$, and $b_j>b_i$ if $j>i+1$.

\begin{figure} [hbtp]
\epsfxsize140pt
$$\epsfbox{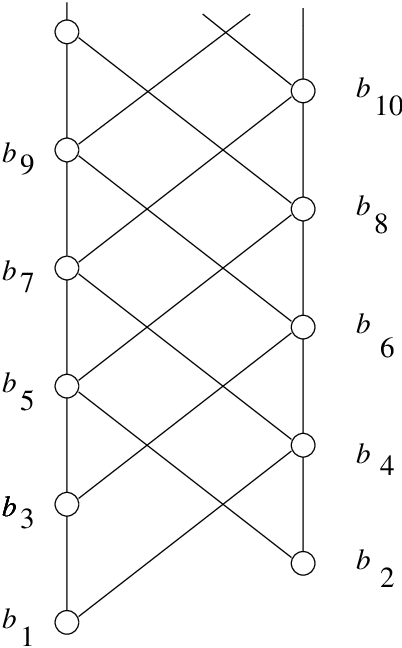}$$
\caption{The causal set $P=(Z,<)$} \label{fig:ladder}
\end{figure}

We will show, in some detail, that there is exactly one order-invariant
measure on $P$.  Some of the methods we use to study this example will be
seen in more generality later.

For $n\in \N$, set $Z_n = \{b_1, \dots, b_n\}$, and $P_n = P_{Z_n}$, the
restriction of $P$ to $Z_n$.  The linear extensions of $P_n$ either have
$b_n$ as the top element, or have $b_{n-1}$ top and $b_n$ next top.  The
former set of linear extensions is in 1-1 correspondence with the set of
linear extensions of $P_{n-1}$, and the latter set is in 1-1
correspondence with the set of linear extensions of $P_{n-2}$.  Therefore
the number $e(P_n)$ of linear extensions of $P_n$ satisfies
$e(P_n) = e(P_{n-1})+e(P_{n-2})$, and so $e(P_n)$ is the $n$th Fibonacci
number $F_n$ (with the convention that $F_0=F_1=1$).  Similarly, we see
that the number of linear extensions of $P_n$ with $b_1$ as the bottom
element is equal to $e(P_{n-1}) = F_{n-1}$.

Let $\nu^n$ denote the uniform measure on linear extensions of the finite
poset $P_n$.  The proportion $\nu^n(E^{P_n}(b_1))$ of linear extensions
of $P_n$ in which $b_1$ is the bottom element is equal to $F_{n-1}/F_n$,
which tends to $\phi = \frac{1}{2}(\sqrt 5 -1) = 0.618\cdots$, as
$n \to \infty$.  Similarly, for each fixed~$k$,
$$
\nu^n(E^{P_n}(b_1b_2 \cdots b_k)) = \frac{F_{n-k}}{F_n} \to \phi^k, \quad
\mbox{ as } n\to \infty.
$$
For any other ordered stem $b_{s(1)}b_{s(2)} \cdots b_{s(k)}$, where $s$
is a permutation of $[k]$ (so the set of elements in the stem is $Z_k$),
and any $n \ge k$, the linear extensions of $P_n$ with initial segment
$b_{s(1)}\cdots b_{s(k)}$ are in 1-1 correspondence with those with
initial segment $b_1\cdots b_k$, so
$\nu^n(E^{P_n}(b_{s(1)}\cdots b_{s(k)}))$ also tends to $\phi^k$ as
$n\to \infty$.

The only other $k$-element down-set of $P$ is
$W_k = \{b_1,\dots,b_{k-1},b_{k+1}\}$, and the same principle applies to
initial segments that are orderings of this set:
$\nu^n(E^{P_n}(b_1\cdots b_{k-1}b_{k+1})) = F_{n-k-1}/F_n \to \phi^{k+1}$,
and the same is true for any other ordered stem whose elements are those
of~$W_k$.

It is now natural to define
$$
\mu(E^P(a_1\cdots a_k)) = \lim_{n\to\infty} \nu^n(E^{P_n}(a_1\cdots a_k)),
$$
for each ordered stem $a_1\cdots a_k$ of $P$: we have seen that all these
limits exist, and we have found their values.  We claim that $\mu$ is an
order-invariant measure on $P$.

By Lemma~\ref{to-check}, we need to verify identities of two types:
\begin{itemize}
\item[(a)] $\mu(E^P(a_1\cdots a_k)) = \sum_c \mu(E^P(a_1\cdots a_kc))$,
for every ordered stem $a_1\dots a_k$, where the sum is over minimal
elements $c$ of $P \sm \{ a_1, \dots, a_k\}$, of which there are at most
two;
\item[(b)] $\mu(E^P(a_1\cdots a_k)) = \mu(E^P(a_{s(1)}\cdots a_{s(k)}))$,
where $s$ is a permutation of $[k]$ and both $a_1\cdots a_k$ and
$a_{s(1)}\cdots a_{s(k)}$ are ordered stems.
\end{itemize}
We could verify all these identities by direct calculation.  However, it
is just as easy to note that these identities all hold for each of the
measures $\nu^n$ with $n>k$, because the $\nu^n$ are uniform measures on
the set of linear extensions of finite posets, and therefore the
identities hold in the limit.  Here, it is crucial that the sums in (a)
are all finite sums.

On the other hand, we claim that the measure $\mu$ defined above is the
only order-invariant measure on $P$.  To prove this, it is enough to show
that $\nu(E(b_1\cdots b_k)) = \phi^k = \mu(E(b_1\cdots b_k))$ for each
$k$, for any order-invariant measure $\nu$ on $P$.  Indeed, the values of
$\nu$ for all other basic events can be derived from the values of the
$\nu(E(b_1\cdots b_k))$, assuming order-invariance, giving us that
$\nu(E(a_1\cdots a_k)) = \mu(E(a_1\cdots a_k))$ for all basic events, and
it follows that $\nu= \mu$, since the family of basic events forms a
separating class.

Let $\nu$ be an order-invariant measure on $P$, and take any $n>k$.
The set $\Xi_n$, a down-set in $P$ of size $n$, can take only the two
values $Z_n = \{b_1, \dots, b_{n-1}, b_n\}$ and
$W_n = \{b_1, \dots, b_{n-1}, b_{n+1}\}$.  We now have
\begin{eqnarray*}
\nu(E^P(b_1\cdots b_k)) & = & \nu(E^P(b_1\cdots b_k) \mid \Xi_n = Z_n)
\, \nu(\{\omega : \Xi_n(\omega) = Z_n \}) \\
&& \mbox{} + \nu(E^P(b_1\cdots b_k) \mid \Xi_n = W_n)
\, \nu(\{\omega : \Xi_n(\omega) = W_n\}). 
\end{eqnarray*}
Therefore $\nu(E^P(b_1\cdots b_k))$ lies between the two values
$\nu(E^P(b_1\cdots b_k) \mid \Xi_n = Z_n)$ and \newline
$\nu(E^P(b_1\cdots b_k) \mid \Xi_n = W_n)$.  By (\ref{uniform2}), these
two values are
$$
\nu^{Z_n}(E^{P_n}(b_1\cdots b_k)) = \nu^n(E^{P_n}(b_1\cdots b_k))
\mbox{ and } \nu^{W_n}(E^{P_{W_n}}(b_1\cdots b_k)) =
\nu^{n-1}(E^{P_{n-1}}(b_1\cdots b_k)).
$$
As both $\nu^n(E^{P_n}(b_1\cdots b_k))$ and
$\nu^{n-1}(E^{P_{n-1}}(b_1\cdots b_k))$ tend to $\phi^k$ as $n \to \infty$,
we have \newline
$\nu(E(b_1\cdots b_k)) = \phi^k$, as required.

In summary, there is exactly one order-invariant measure on $P$.

This example is considered from a slightly different perspective
in~\cite{BL1}.

\section{Extremal Order-Invariant Measures} \label{sec:extremal}

Recall that an order-invariant measure $\mu$ on $P$ is {\em extremal} if
it cannot be written as a convex combination of two different
order-invariant measures on $P$.

Two elements $\omega = x_1x_2\dots , \omega' = y_1y_2\dots $ of $\Omega^P$
are said to be {\em finite rearrangements} if for some $n \in \N$,
$\{ x_1, \dots, x_n\} = \{ y_1, \dots, y_n\}$ and, for $m > n$,
$x_m = y_m$.  A {\em tail event} in $\Omega^P$ is a subset $E$ of
$\Omega^P$ such that, if $\omega \in E$ and $\omega'$ is a finite
rearrangement of $\omega$, then $\omega' \in E$.  A measure $\mu$ is said
to have {\em trivial tail} if $\mu(E) \in \{0,1\}$ for every tail event $E$.

For $\omega = x_1x_2 \cdots \in \Omega^P$, and $k \in \N$, we can define
a measure $\nu^k(\cdot)(\omega)$ on $\Omega^P$ as the uniform measure on
the set of elements of $\Omega^P$ of the form
$x_{s(1)} \cdots x_{s(k)}x_{k+1}x_{k+2}\cdots$, where $s$ is a permutation
of $[k]$.  There are $e(P_{X_k})$ elements of this form, one corresponding
to each linear extension $x_{s(1)}\cdots x_{s(k)}$ of $P_{X_k}$.  We say
that an order-invariant measure $\mu$ on $P$ is {\em essential} if, for
every event $E \in \cF^P$, for $\mu$-almost every $\omega$,
$\nu^k(E)(\omega) \to \mu(E)$ as $k \to \infty$.

We studied the property of extremality at length in~\cite{BL1}, in the
wider context mentioned earlier.  In particular, we gave a number of
equivalent conditions for an order-invariant measure to be extremal.
These all transfer to our present setting: if an order-invariant measure
on $P$ is extremal in the space of all order-invariant measures, then it
is certainly extremal in the space of order-invariant measures on $P$;
conversely, if an order-invariant measure $\mu$ on $P$ is a convex
combination of two other order-invariant measures $\mu_1$ and $\mu_2$,
then these must both be order-invariant measures on $P$ -- meaning that,
for events $A$ such that $\mu(A)=0$ because $\mu$ is an order-invariant
measure on the fixed causal set $P$, we also have
$\mu_1(A) = \mu_2(A) = 0$ -- so if $\mu$ is extremal among
order-invariant measures on $P$, then it is extremal among all
order-invariant measures.

Putting this observation together with Theorem~7.2 and Corollary~7.4
in~\cite{BL1} gives us the following result.

\begin{theorem} \label{thm:equivalence}
Let $\mu$ be an order-invariant measure on a causal set $P$, and let $\cH$ be
a separating class in $(\Omega^P,\cF^P)$.
The following are equivalent:
\begin{itemize}
\item $\mu$ is extremal,
\item $\mu$ has trivial tails,
\item $\mu$ is essential,
\item for every event $E \in \cH$, for $\mu$-almost every $\omega$,
$\nu^k(E)(\omega) \to \mu(E)$ as $k \to \infty$.
\end{itemize}
\end{theorem}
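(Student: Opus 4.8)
The plan is to leverage the companion paper, exactly as the preamble to the statement advertises. The result is described as a consequence of Theorem~7.2 and Corollary~7.4 of~\cite{BL1}, and the passage immediately before the statement already does the conceptual work: it observes that extremality of an order-invariant measure $\mu$ on a \emph{fixed} causal set $P$ is the same as extremality of $\mu$ within the space of \emph{all} order-invariant measures studied in~\cite{BL1}. So the first step I would take is simply to invoke that equivalence, quoting the paragraph that precedes the theorem, so that the question reduces to the corresponding statement in~\cite{BL1}.

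Next I would unpack the three remaining bullets and match them one by one to the cited results. The equivalence ``$\mu$ extremal $\iff$ $\mu$ has trivial tails'' is the standard extreme-point/tail-triviality dictionary for exchangeable-type measures; I would cite it directly from Theorem~7.2 of~\cite{BL1}, noting that a tail event in the present $\Omega^P$ (invariance under finite rearrangements, as defined just above) is precisely an instance of the tail events considered there. For the ``essential'' bullet, I would recall the definition given in the excerpt: $\nu^k(E)(\omega)\to\mu(E)$ for $\mu$-a.e.\ $\omega$, for \emph{every} $E\in\cF^P$; this is the martingale-convergence characterisation, and it is again exactly Corollary~7.4 of~\cite{BL1} transferred verbatim. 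The fourth bullet is the same statement restricted to a separating class $\cH$ rather than all of $\cF^P$, so the only genuinely new content is the implication ``(fourth bullet) $\Rightarrow$ (essential)''.

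That implication is the one place where a short argument is needed rather than a citation, so I would treat it as the crux. The point is a routine but not entirely trivial monotone-class / approximation step: knowing $\nu^k(E)(\omega)\to\mu(E)$ a.s.\ for $E$ ranging over the separating class $\cH$, I want to conclude the same for all $E\in\cF^P$. Since $\cH$ separates probability measures on $(\Omega^P,\cF^P)$, and since $\Omega^P$ is a (Polish, indeed totally disconnected) metric space, the basic events $E(a_1\cdots a_k)$ form a countable generating algebra, so it suffices to handle $E$ in the algebra generated by $\cH$; here I would use that $\nu^k(\cdot)(\omega)$ is for each fixed $k$ and $\omega$ a genuine finitely additive (indeed $\sigma$-additive) probability measure, so the convergence passes through finite unions, complements and the countably many basic events by a union-of-null-sets argument, and then to all of $\cF^P$ by the usual approximation of a measurable set from within/without by the algebra together with dominated convergence. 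I expect this monotone-class bookkeeping — making sure the exceptional $\mu$-null sets can be amalgamated over a countable family and that the limit survives the approximation — to be the main (though still modest) obstacle; everything else is a direct appeal to~\cite{BL1}.

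Finally, I would close the loop by noting that the four bullets are now linked in a cycle: extremal $\Leftrightarrow$ trivial tails and extremal $\Leftrightarrow$ essential come from~\cite{BL1}; essential trivially implies the fourth bullet (restrict from $\cF^P$ to $\cH$); and the fourth bullet implies essential by the approximation argument above. Hence all four are equivalent, which is the assertion of Theorem~\ref{thm:equivalence}.
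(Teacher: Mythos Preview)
Your approach is exactly the paper's: the entire ``proof'' consists of the paragraph preceding the theorem (extremality among order-invariant measures on $P$ coincides with extremality among all order-invariant measures) together with a bare citation of Theorem~7.2 and Corollary~7.4 of~\cite{BL1}. In particular, the separating-class version (your fourth bullet) is already contained in Corollary~7.4 there, so the monotone-class step you single out as ``the crux'' is not carried out in the present paper at all; your sketch of it is therefore surplus, and note that as written it tacitly treats $\cH$ as countable (you speak of amalgamating null sets over ``the countably many basic events''), which the hypothesis does not guarantee.
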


We illustrate this result by returning to the example in the Introduction.

\bigbreak

\noindent {\bf Example 1, revisited}. \quad 
As before, let $P$ be the disjoint union of two infinite chains
$B: b_1<b_2<\cdots$ and $C:c_1<c_2<\cdots$.  For $q \in [0,1]$, let 
$\mu_q$ be the order-invariant measure on $P$ defined earlier. 

The cases $q=0$ and $q=1$ are special.  If $q=0$, then elements from $B$
are never chosen, and $\Xi=C$ a.s.; if $q=1$, then $\Xi=B$ a.s.
If $q\in (0,1)$, then $\Xi=B\cup C$ a.s.

We claim that each measure $\mu_q$ is an extremal order-invariant measure.
The easiest way to see this is to show that $\mu_q$ satisfies the final
condition in Theorem~\ref{thm:equivalence}.  Consider the event
$E(a_1\cdots a_k)$, where $a_1\cdots a_k$ is an ordered stem of $P$, and
$\{a_1,\dots,a_k\} = \{b_1,\dots,b_\ell,c_1,\dots,c_{k-\ell}\}$.  For
$\mu_q$-almost every $\omega$, we have
$|B\cap \Xi_n(\omega)|/n \to q$ as $n \to \infty$.  Now suppose that
$|B\cap \Xi_n(\omega)| = m_n(\omega) = m$; we have
$$
\nu^n(E(a_1\cdots a_k))(\omega) = \frac{\binom{n-k}{m-\ell}}{\binom{n}{m}}
= \left(\frac{m}{n}\right)^\ell \left(\frac{n-m}{n}\right)^{k-\ell}
\left( 1 - O\left(\frac{k^2}{\min(m,n-m)}\right)\right).
$$
Therefore, for any $\omega$ such that $m_n(\omega)/n$ tends to $q$, we have
\begin{equation} \label{eq:before}
\lim_{n\to \infty} \nu^n(E(a_1\cdots a_k))(\omega) =
q^\ell(1-q)^{k-\ell} = \mu_q(E(a_1\cdots a_k)).
\end{equation}
Therefore, $\mu_q$ satisfies the final condition given in
Theorem~\ref{thm:equivalence}, and hence is extremal.

Given any probability measure $\rho$ on $[0,1]$, define a probability
measure $\mu_\rho$ by first choosing a random parameter $\chi \in [0,1]$ 
according to $\rho$, then sampling according to $\mu_\chi$.  In other words,
$\mu_\rho$ is a convex combination of the order-invariant measures
$\mu_q$, so is also order-invariant.  Suppose that $\rho$ is not a.s.\
constant, so that there is some $x$ such that
$0 < p= \rho(\chi \le x) < 1$; we claim that $\mu_\rho$ is not extremal.
There are several easy arguments to show this, based on the various conditions
in Theorem~\ref{thm:equivalence}.

\begin{itemize}
\item[(a)] We can argue from the definition; for instance we can consider the
conditional probability measures $\mu^1$ and $\mu^2$ obtained by
conditioning $\mu_\rho$ on the events that $\chi \le x$ and $\chi >x$
respectively, and write $\mu_\rho = p\mu^1 + (1-p)\mu^2$.

\item[(b)] We can consider the tail event
$\limsup_{n\to \infty} |B\cap \Xi_n|/n \le x$, which has probability~$p$ not
equal to~0 or~1.

\item[(c)] We can note that $\nu^n(E(b_1))(\omega)$ a.s.\ converges to the value
$\chi$ chosen according to $\rho$, whereas
$\mu_\rho(E(b_1)) = \E_\rho(\chi)$, so $\mu_\rho$ is not essential.
\end{itemize}

The description of $\mu_\rho$ includes several apparently different
processes.  For instance, consider the following process: having chosen
the bottom $n$ elements, $m$ from $B$ and $k=n-m$ from $C$, choose the
next element to be from $B$ with probability $(m+1)/(n+2)$.  It is easy to
check directly that this defines an order-invariant process on $P$.  The
theory of {\em P\'olya's Urn} (see, for instance, Exercise~E10.1 in
Williams~\cite{williams}) tells us that the proportion of elements taken
from $B$ in the first $n$ steps converges to some limit $\chi$ as
$n\to \infty$, and that this limit $\chi$ has the uniform distribution on
$(0,1)$.  Moreover, it is possible to show that this process has the same
finite-dimensional distributions as the one defined by choosing $\chi$
from the uniform distribution in advance, then choosing the natural
extension according to $\mu_\chi$.  See Ross~\cite{ross}, Section~3.6.3.
Other urn processes correspond to other measures on $[0,1]$.

We will now show that every extremal order-invariant measure $\mu$ on $P$
is of the form $\mu_q$, for some $q \in [0,1]$.  Given such a measure
$\mu$, we set $q = \mu(E(b_1))$, the probability that the bottom element
of the natural extension is in $B$.  Our aim is to show that
$\mu(E(a_1\cdots a_k)) = \mu_q(E(a_1\cdots a_k))$ for every ordered stem
$a_1\cdots a_k$ of $P$.

For any $n \in \N$, and any $\omega \in \Omega^P$ with
$X_n = \Xi_n(\omega) = \{ b_1, \dots, b_m, c_1, \dots, c_{n-m}\}$, the
probability $\nu^n(E(b_1))(\omega)$ that the bottom element of a random
linear extension of $P_{X_n}$ is $b_1$ is equal to $m/n$, the proportion
of elements of $B$ in $X_n$.  As $\mu$ is extremal, and therefore
essential, we have that $\nu^n(E(b_1))(\omega) \to q$ a.s., and so the
proportion of elements of $B$ among the first $n$ elements also 
a.s.\ tends to $q$.

Now, take any basic event $E(a_1\cdots a_k)$, where the $a_i$ include
exactly $\ell$ elements of $B$, and any $\omega$ such that $m$ of the
first $n$ elements are in $B$.  As in (\ref{eq:before}), for any $\omega$
such that the ratio $m/n$ of elements of $B$ tends to $q$, we have
$$
\lim_{n\to \infty} \nu^n(E(a_1\cdots a_k))(\omega) = q^\ell(1-q)^{k-\ell}.
$$
We deduce that $\mu(E(a_1\cdots a_k)) = q^\ell(1-q)^{k-\ell} =
\mu_q(E(a_1\cdots a_k))$, since $\mu$ is essential.  As $\mu$ agrees with
$\mu_q$ on all basic events, $\mu$ and $\mu_q$ are equal.

Thus the $\mu_q$ are the only extremal order-invariant measures on $P$.

This example also appears in Section~2 of the paper of Kerov~\cite{Kerov},
and in~\cite{BL1}.

\bigbreak

It is not true that every extremal order-invariant measure is an extremal
order-invariant measure on some fixed $P$.  For instance, an extremal
order-invariant measure is derived from the following process: at each
step, take a label uniformly at random from $[0,1]$, and take a new
element incomparable with all existing elements.  The causal set thus
generated is a.s.\ an antichain.

As discussed at the end of Section~8 of~\cite{BL1}, every order-invariant
measure can be built from an order-invariant measure on some fixed $P$ by
a process of replacing some infinite chains of $P$ by infinite
antichains, with labels generated according to some probability
distribution on $[0,1]$.  Thus the problem of classifying extremal
order-invariant measures is reduced to the problem of classifying extremal
order-invariant measures on a fixed $P$.

Another result of~\cite{BL1} is that every order-invariant measure $\mu$
has an expression, unique up to a.s., as a {\em mixture} of extremal
order-invariant measures: there is a probability space $(W,\cG,\rho)$,
whose elements are extremal order-invariant measures $\mu_\omega$, and
$\mu$ is given by sampling $\mu_\omega$ from this space, and then sampling
from $\mu_\omega$ (more formally,
$\mu(\cdot) = \int_W \mu_\omega(\cdot)\, d\rho(\mu_\omega)$).  If $\mu$ is
an order-invariant measure on some fixed causal set $P$, then the extremal
order-invariant measures $\mu_\omega$ are, $\rho$-a.s., measures on $P$,
and so we can specify the mixture so that the $\mu_\omega$ are all
measures on $P$.

In Example~1, for instance, this implies that every order-invariant
measure on $P$ is a mixture of the $\mu_q$, that is, of the form
$\mu_\rho$ for some probability measure $\rho$ on $[0,1]$.

\section {Faithful and Non-faithful Processes} \label{sec:fnfp}

A causet process on $P=(Z,<)$, and/or its associated measure, is said to
be {\em faithful} if $\Xi(\omega)=Z$ a.s.  If a causet process is
faithful, then the associated probability measure $\mu$ is a measure on
the space $L(P)$ of natural extensions of $P$.

For instance, in Example~1 above, the measure $\mu_\rho$ is faithful
if and only if $\rho(\{0,1\}) = 0$.

If, for all elements $x$ of a causal set $P$, the set $I(x)$ of elements
incomparable to $x$ is finite, then $P$ has no proper infinite down-sets,
and therefore all causet processes are faithful.  Conversely, if $I(x)$ is
infinite for some $x$, then any causet process on the restriction
$P_{I(x)\cup D(x)}$ is also a causet process on $P$: if the restricted
process is order-invariant, then it can be seen as an unfaithful
order-invariant process on $P$.  (In Section~\ref{sec:dbt}, we shall see a
class of examples of causal sets $P$ that admit a unique order-invariant
measure, which is faithful, even though $I(x)$ is infinite for every
element $x$: there is no order-invariant causet process on any restriction
$P_{I(x) \cup D(x)}$.)

Let $\mu$ be an order-invariant measure on $P=(Z,<)$.  An element $x\in Z$
is said to be {\em absent} in $\mu$ if $x \notin \Xi$ almost surely.  Of
course, if there is an absent element in~$\mu$, then $\mu$ is unfaithful.
We shall prove that any maximal element $x$ of $P$ is absent in all
order-invariant causet processes on $P$ -- more generally, any element $x$
with no infinite chain above it is always absent.

Here and in future, when we are dealing with uniformly random linear
extensions of a {\em finite} poset, we shall denote the linear extension
$\zeta = \zeta_1\cdots \zeta_n$.

Let $P=(Z,<)$ be a finite poset.  For $x \in Z$ and $i \in [|Z|]$, we set
$r_i(x) = \nu^Z(\{ \zeta : \zeta_i = x\})$, the probability that, in a
random linear extension of $P$, $x$ is in position~$i$.

\begin{lemma} \label{lemma2}
If $x$ is a maximal element in the finite poset $P=(Z,<)$, then the
sequence $(r_i(x))$ is non-decreasing in $i$.
\end{lemma}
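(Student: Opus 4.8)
The plan is to establish the stronger combinatorial fact that, writing $n = |Z|$, for every $i \in \{1, \dots, n-1\}$ the number of linear extensions of $P$ with $x$ in position $i$ is at most the number with $x$ in position $i+1$. Since $r_i(x)$ is exactly this count divided by $e(P)$, dividing through yields $r_i(x) \le r_{i+1}(x)$, and hence monotonicity of the whole sequence.

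To compare the two counts I would construct an injection $\Phi$ from the set of linear extensions $\zeta = \zeta_1 \cdots \zeta_n$ with $\zeta_i = x$ into the set of linear extensions with $x$ in position $i+1$, namely the transposition of the entries in positions $i$ and $i+1$:
$$
\Phi(\zeta_1 \cdots \zeta_n) = \zeta_1 \cdots \zeta_{i-1}\, \zeta_{i+1}\, \zeta_i\, \zeta_{i+2} \cdots \zeta_n.
$$
Only the relative order of the pair $(\zeta_i, \zeta_{i+1}) = (x, \zeta_{i+1})$ is altered, so $\Phi(\zeta)$ is again a linear extension of $P$ as soon as $x$ and $\zeta_{i+1}$ are incomparable in $P$; and $\Phi$ is clearly injective, since one recovers $\zeta$ from $\Phi(\zeta)$ by the same transposition (the position of $x$ in $\Phi(\zeta)$ being $i+1$).

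The only point requiring justification -- and the single place where maximality of $x$ enters -- is that $x$ and $\zeta_{i+1}$ are indeed incomparable whenever $\zeta_i = x$. For this I would argue: since $x$ is maximal, no element of $Z$ lies above $x$; and since $\zeta$ is a linear extension, any element below $x$ appears before $x$, i.e.\ in some position $< i$. Consequently every entry $\zeta_j$ with $j > i$ is incomparable to $x$ -- in particular $\zeta_{i+1}$ is -- so $\Phi$ is well-defined. This observation is essentially the whole content of the lemma; there is no real obstacle beyond recognising that a maximal element is incomparable to everything following it in a linear extension.
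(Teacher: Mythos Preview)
Your proof is correct and follows essentially the same approach as the paper: define an injection from linear extensions with $x$ in position $i$ to those with $x$ in position $i+1$ by swapping the entries in positions $i$ and $i+1$, using maximality of $x$ to ensure the result is still a linear extension. Your write-up is in fact slightly more explicit than the paper's in justifying the incomparability of $x$ and $\zeta_{i+1}$.
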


\begin{proof}
Set $n = |Z|$ and, for each $i=1, \dots, n$, let $L_i$ denote the set of
linear extensions $x_1\cdots x_n$ of $P$ in which $x_i = x$.  For $i<n$,
define a map $\phi_i:L_i \to L_{i+1}$ by
$$
\phi_i(x_1\cdots xx_{i+1}\cdots x_n) =
x_1\cdots x_{i+1}x\cdots x_n.
$$
This map $\phi_i$ is well-defined because, since $x$ is maximal,
$x_1\cdots x_{i+1}x \cdots x_n$ is a linear extension of $P$ whenever
$x_1\cdots xx_{i+1}\cdots x_n$ is.  For each $i$, the map $\phi_i$ is
clearly an injection, and so $|L_i| \le |L_{i+1}|$, and therefore
$r_i(x) \le r_i(x+1)$.
\end{proof}

\begin{prop} \label{maximal-absent}
Suppose $\mu$ is an order-invariant measure on a causal set $P=(Z,<)$.  If
$x \in Z$ is not absent in $\mu$, then there is an infinite chain in $P$
with bottom element $x$.

In particular, if $P$ has no infinite chain, then there is no
order-invariant measure on $P$.
\end{prop}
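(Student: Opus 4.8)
The plan is to argue by contradiction: suppose $x$ is not absent in $\mu$, so that $p_0 := \mu(x \in \Xi) > 0$, and suppose for contradiction that there is no infinite chain in $P$ with bottom element $x$, i.e., $U[x] = D(x) \cup \{x\} \cup U(x)$ contains no infinite chain through $x$ — more precisely, every chain in $P$ with bottom element $x$ is finite. First I would reduce to the finite poset $P_{X_n}$, where $X_n = \Xi_n(\omega)$ is the (random) set of the first $n$ elements of $\omega$. Conditioned on $\Xi_n = A$, order-invariance (in the form~(\ref{uniform})) says the restriction of $\omega$ to $[n]$ is a uniformly random linear extension $\zeta$ of $P_A$, so the probability that $x$ occupies a given position $i \le n$ is exactly $r_i(x)$ computed in $P_A$.

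The key point I want to exploit is that, because no infinite chain has bottom element $x$, the element $x$ tends to appear near the \emph{top} of any large stem containing it. Quantitatively: if $A$ is a stem containing $x$, then in $P_A$ the down-set $D[x]$ is bounded in size (it equals $D(x) \cup \{x\}$, a fixed finite set, since $D(x)$ is finite for a causal set), whereas everything comparable-above $x$ and incomparable to $x$ can be large. What I really need is that $x$ is maximal, or close to maximal, in $P_A$ for a typical large $A$; in general $x$ need not be maximal in $P$. So the cleaner route is: since every chain with bottom $x$ is finite, the up-set $U[x]$ of $x$ in $P$ is a causal set with no infinite chains — if $U[x]$ were infinite it would contain an infinite antichain or... no, that's not immediate either. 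Let me instead run the argument directly on finite restrictions. Fix $n$ large. On the event $\{x \in \Xi_n\}$, apply Lemma~\ref{lemma2}: actually $x$ need not be maximal in $P_{X_n}$, but I can pass to the sub-poset consisting of $x$ together with the elements of $X_n$ that are $\le x$ or incomparable to $x$ — call it $P'_{X_n}$ — and note that every linear extension of $P_{X_n}$ restricts to one of $P'_{X_n}$, and $x$ is maximal in $P'_{X_n}$. Hence by Lemma~\ref{lemma2} the positions of $x$ within $P'_{X_n}$ are non-decreasingly weighted, which forces the number of elements of $X_n$ below $x$ (in a uniform linear extension) to be at least roughly half the size of $P'_{X_n}$ in expectation. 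But the number of elements \emph{below} $x$ in $X_n$ is at most $|D(x)|$, a fixed constant. Therefore $|P'_{X_n}|$ is bounded by a constant (say $2|D(x)| + O(1)$) on the event $x \in \Xi_n$ — meaning $x$ has boundedly many elements of $\Xi_n$ below-or-incomparable to it.

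From here I would derive the contradiction. Since $\mu$ is order-invariant it is essential when extremal, and in general a mixture of extremals; but more simply: the event $\{x \in \Xi\}$ together with the bound just obtained says that, almost surely on $\{x\in\Xi\}$, only finitely many elements of $\Xi$ are $\le x$ or incomparable to $x$ — call this finite set $S$. Then $x$ is added to the natural extension within the first $|S|$ steps after the last element of $S\setminus\{x\}$ below-or-incomparable to it... the real punch is: the elements of $\Xi$ \emph{above} $x$ must all be added after $x$, and there are infinitely many of them (since $\Xi$ is infinite and all but finitely many elements of $\Xi$ are above $x$), and they form, together with $x$, a sub-causal-set of $U[x]$ in which $x$ is the minimum. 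But we assumed every chain with bottom $x$ is finite; an infinite causal set with a minimum element $x$ must contain an infinite chain with bottom $x$ by König's lemma applied to the Hasse diagram above $x$ (each element has finite down-set, the poset is infinite, so there is an infinite path upward from $x$). This contradicts our assumption. Hence $x$ is absent. The second statement is immediate: if $P$ has no infinite chain, then every element $x$ fails the conclusion, so every element is absent, so $\Xi = \emptyset$ a.s., contradicting $\Xi \supseteq \Xi_n$ having $n$ elements — there can be no order-invariant (indeed no) causet process, since $\Omega^P$ would be empty (there is no natural extension of any infinite down-set, as $P$ has no infinite down-set that is a causal set... actually $P$ itself has no natural extension), so $L(P)=\emptyset$ and there is nothing to put a measure on.

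The main obstacle I anticipate is the step where I bound $|P'_{X_n}|$ on the event $x \in \Xi_n$: Lemma~\ref{lemma2} gives monotonicity of $r_i(x)$ in a poset where $x$ is maximal, and I need to turn that into the assertion that $x$'s expected rank is at least half the poset size, hence that few elements lie weakly below $x$ — uniformly in $n$, and then let $n \to \infty$. The quantitative extraction (summing $r_i(x)$ against $i$, using $\sum_i r_i(x) = 1$ and monotonicity) is routine, but I must be careful that the sub-poset $P'_{X_n}$ I restrict to has size tending to infinity on a positive-probability sub-event — which is exactly what I am trying to refute, so the logic has to be set up as: either $|P'_{X_n}|$ stays bounded a.s.\ on $\{x\in\Xi\}$ (good, gives the König's-lemma contradiction), or it is unbounded with positive probability (then monotonicity forces $|D(x)| = +\infty$ in the limit, contradicting $P$ being a causal set). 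Making this dichotomy airtight is the crux.
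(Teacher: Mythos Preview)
Your argument has a genuine gap at the step where you claim $|P'_{X_n}|$ is bounded on $\{x \in \Xi_n\}$. You apply Lemma~\ref{lemma2} to $P'_{X_n}$ (where $x$ is maximal) to conclude that the expected position of $x$ in a uniformly random linear extension of $P'_{X_n}$ is at least $|P'_{X_n}|/2$, and then write ``but the number of elements below $x$ in $X_n$ is at most $|D(x)|$'' to force $|P'_{X_n}| \le 2|D(x)| + O(1)$. This does not follow: the quantity bounded by $|D(x)|$ is the number of elements \emph{below $x$ in the partial order}, whereas the position of $x$ in a linear extension counts the elements \emph{preceding $x$ in that linear extension}, which includes elements incomparable to $x$. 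There is no contradiction between ``expected position $\ge |P'_{X_n}|/2$'' and ``$|D(x)|$ is fixed'', and your fallback dichotomy (``monotonicity forces $|D(x)| = +\infty$'') repeats the same conflation. Moreover, order-invariance gives you the uniform distribution on linear extensions of $P_{X_n}$, not of $P'_{X_n}$; restriction to the down-set $P'_{X_n}$ does not preserve uniformity, so Lemma~\ref{lemma2} applied to $P'_{X_n}$ is not directly relevant. Concretely, take $A = \{x\} \cup \{y_1,\dots,y_m\} \cup \{z_1,\dots,z_m\}$ with each $y_i > x$ and the $z_i$ incomparable to everything: then $|P'_A| = m+1$ is unbounded, yet $r_1^A(x) = (m+1)/(2m+1) \to 1/2$, so $x$ sits at the bottom with probability bounded away from~$0$.

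Your K\"onig's lemma step is also suspect: in a causal set an element can have infinitely many upper covers, so finite down-sets do not give finite upward branching, and you would need to iterate your (currently broken) boundedness claim over all $y \in U[x]$, with care about null sets, rather than invoke K\"onig directly. The paper avoids all of this by a two-step reduction: first it uses Lemma~\ref{lemma2} directly on $P_W$ (no sub-poset needed) to show that any \emph{maximal} element of $P$ is absent --- if $\mu(\xi_j = x) > 1/m$, take $n = m+j-1$ and note $r_j^W(x) \le 1/(n-j+1)$ for every $n$-element stem $W$; second, it passes to the poset $P_W$ on the set $W$ of non-absent elements, observes that $\mu$ is order-invariant on $P_W$, applies the first step to conclude $P_W$ has no maximal element, and then builds the infinite chain above $x$ greedily inside $W$. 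The reduction to maximal $x$ is precisely what makes Lemma~\ref{lemma2} applicable without the restriction-to-$P'_{X_n}$ detour.
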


\begin{proof}
We start by proving that, if $x$ is maximal in $P$, then $x$ is absent
in~$\mu$.

Suppose then that $x$ is a maximal element that is not absent in $\mu$.
Now, for some $j,m \in \N$, we have
$\mu(\{\omega: \xi_j(\omega) =x\}) > 1/m$.  Set $n=m+j-1$, so that
$\mu(\{\omega: \xi_j(\omega) = x\}) > 1/(n-j+1)$.

For any stem $W$ of $P$, including $x$, with $|W|=n$, Lemma~\ref{lemma2}
tells us that $r_i^W(x) = \nu^W(\{\zeta : \zeta_i = x\})$ is
non-decreasing in $i$.  Therefore all of the $r_i^W(x)$, for
$i=j, \dots, n$, are at least $r_j^W(x)$, and so
$r_j^W(x) \le 1 /(n-j+1)$.

Let $\cW_n$ denote the set of all $n$-element stems of $P$.  For
$W \in \cW_n$, set $a_W = \mu(\{\omega : \Xi_n(\omega) =W\})$.  Thus
$\sum_{W\in \cW_n} a_W = 1$.

By order-invariance, if $x \in W$,
$$
\mu(\xi_j = x \mid \Xi_n = W) = r_j^W(x) \le \frac{1}{n-j+1},
$$
and so
$$
\mu(\{ \omega : \xi_j(\omega) = x\}) = \sum_{W: x \in W}
a_W \, \mu(\xi_j = x \mid \Xi_n = W) \le \frac{1}{n-j+1},
$$
which is a contradiction.  This proves that any maximal element $x$ is
absent in $\mu$.

To prove the full result, suppose that $\mu$ is an order-invariant measure
on $P=(Z,<)$, and let $W$ be the set of non-absent elements.  Now $\mu$ is
also an order-invariant measure on $(W,<_W)$, so this causal set has no
maximal elements.  For any element $x \in W$, we can construct an infinite
chain in $(W,<_W)$ with bottom element $x$ recursively: having found
$x=x_0<x_1< \cdots <x_k$, let $x_{k+1}$ be any element of $W$ above $x_k$.

For the final statement, if there are no infinite chains in $P$, and $\mu$
is an order-invariant measure on $P$, then every element is absent in
$\mu$, which is not possible.
\end{proof}

\bigbreak

\noindent {\bf Example 3}. \quad
Let $P=(Z,<)$ be a countably infinite antichain.  As $P$ contains no
infinite chains, there is no order-invariant causet process on $P$, by
Proposition~\ref{maximal-absent}.

In the more general context of~\cite{BL1}, there is an order-invariant
process giving rise to an antichain a.s., as discussed in that paper.
However, such a process is not an order-invariant process on a particular
labelled antichain: the labels on the elements of the generated antichain
are random.

\bigbreak

\noindent {\bf Example 4}. \quad 
Let $P$ consist of one infinite chain $b_1 < b_2 < \cdots$ together with a
single incomparable element $x$.  For any order-invariant measure $\mu$ on
$P$, the maximal element $x$ is absent in $\mu$.  Thus there is no
faithful order-invariant process on $P$, and the only order-invariant
process is the one whose measure is given by $\mu(b_1b_2\cdots) =1$; i.e.,
at each stage $i$, the process a.s.\ selects the next element $b_i$ of the
infinite chain.

\bigbreak

The causal sets in Examples~1, 3 and~4 are all upward-branching forests,
i.e., causal sets in which every element has at most one lower cover.
Equivalently, $P$ is an upward-branching forest if, for each element $x$
of $P$, the set $D[x]$ is a finite chain.  We can extend the arguments
used in the analyses of these examples as follows.

\begin{prop} \label{prop:flow}
Suppose the causal set $P=(Z,<)$ is an upward-branching forest.  Then
there is a faithful order-invariant process on $P$ if and only if $P$ has
no maximal element.
\end{prop}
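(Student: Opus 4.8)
The plan is to treat the two directions separately. For the ``only if'' direction, suppose $P$ has a maximal element $x$. Then Proposition~\ref{maximal-absent} tells us that $x$ is absent in every order-invariant measure on $P$, so no order-invariant causet process on $P$ can be faithful. All the work is in the ``if'' direction: assuming $P$ has no maximal element, I must exhibit a faithful order-invariant process.

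Since $P$ is an upward-branching forest, each non-minimal element $y$ has a unique lower cover $p(y)$ (its \emph{parent}); the minimal elements are the \emph{roots}, and the \emph{children} of $x$ are its upper covers, i.e.\ the elements $y$ with $p(y)=x$. Because $P$ has no maximal element, every element has at least one child, and (as $P$ is countable) finitely or countably many. First I would construct a positive \emph{flow} $f\colon Z\to(0,\infty)$ recursively down the forest: distribute a total of $1$ among the roots, each getting a positive share, and, having fixed $f(x)$, distribute it among the children of $x$, each receiving a positive amount, with the amounts summing to $f(x)$. This is well-defined because $D[x]$ is a finite chain for every $x$, and by construction $f$ satisfies \emph{flow conservation}: $\sum_{y:\,p(y)=x}f(y)=f(x)$ for every $x$, and $\sum_{r\ \mathrm{a\ root}}f(r)=1$.

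Next, define a candidate measure by $\mu(E(a_1\cdots a_k))=\prod_{i=1}^k f(a_i)$. This quantity depends only on the underlying set $A=\{a_1,\dots,a_k\}$, so $\mu$ will be order-invariant as soon as we know it is a measure. To invoke Lemma~\ref{to-check}, condition~(i) is trivial, while condition~(ii), after dividing through by the positive quantity $\prod_{x\in A}f(x)$, is exactly the assertion that
$$
\sum_{b}f(b)=1\qquad\text{for every stem }A,
$$
where $b$ ranges over the set $M(A)$ of minimal elements of $P\setminus A$. I would prove this by induction on $|A|$. The base case $A=\emptyset$ holds since $M(\emptyset)$ is the set of roots. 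For the inductive step, write $A=A_0\cup\{a\}$ with $a$ a maximal element of $P_A$, so that $A_0$ is a stem and $a\in M(A_0)$; the key point is that, because $D(b)$ is a chain with top element $p(b)$ for every $b$, the frontier evolves cleanly, $M(A)=(M(A_0)\setminus\{a\})\sqcup\{\,\text{children of }a\,\}$, and then flow conservation at $a$ gives $\sum_{b\in M(A)}f(b)=\sum_{b\in M(A_0)}f(b)=1$. I expect this frontier identity to be the crux of the argument: it is precisely where the forest hypothesis (unique lower covers, so that $D(b)$ is a chain) is used, and once it is in place the remaining computations are mechanical.

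Finally I would verify faithfulness. In the process associated with $\mu$, from a state whose underlying stem is $A$ the next element is $b$ with probability $\mu(E(a_1\cdots a_k b))/\mu(E(a_1\cdots a_k))=f(b)>0$, for each $b\in M(A)$. Fix an element $x$ and write $D[x]=\{z_1<z_2<\cdots<z_\ell=x\}$, where $z_1$ is a root and each $z_{i+1}$ is a child of $z_i$. Since $z_1$ belongs to $M(A)$ for every stem $A$ not containing it, and at each such step is chosen with the fixed positive probability $f(z_1)$, it is added after finitely many steps almost surely; and once $z_i$ has been added, $z_{i+1}$ lies in $M$ of every subsequent stem until it too is added, hence is almost surely eventually added. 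By induction $x=z_\ell\in\Xi$ almost surely, and since $Z$ is countable this gives $\mu(\Xi=Z)=1$, so $\mu$ is faithful, completing the proof.
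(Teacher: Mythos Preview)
Your proof is correct and follows essentially the same approach as the paper: both construct a strictly positive flow $f$ of value~1 on the forest, define the process by choosing each minimal element $b$ of $P\setminus A$ with probability $f(b)$, and note that $\mu(E(a_1\cdots a_k))=\prod_i f(a_i)$ depends only on the underlying set. You supply more detail than the paper in two places---the induction on $|A|$ for the frontier identity $\sum_{b\in M(A)} f(b)=1$ (the paper simply asserts this), and the faithfulness argument via the chain $D[x]$ (which the paper omits entirely)---but the underlying strategy is identical.
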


\begin{proof}
If there is a maximal element $x$, then Proposition~\ref{maximal-absent}
shows that $x$ is absent, so there is no faithful order-invariant process.

If there is no maximal element, then we can define a faithful
order-invariant process via a non-zero flow $f$ through the forest, with
value~1.  To be precise, a {\em flow} in $P$ is a function
$f:Z \to {\mathbb R}^+$ satisfying $f(x) = \sum_{y\cdot> x} f(y)$ for all
$x \in Z$, where the sum is over all elements $y$ such that $(x,y)$ is a
covering pair.  The {\em value} of the flow $f$ is the sum over all
minimal elements $x$ of $f(x)$.  (To obtain a flow $g$ through the edges
(covering pairs) of the forest, in the usual sense, we set $g(x,y) = f(y)$
for each covering pair $(x,y)$.)

A flow $f(x)$ can be constructed by working recursively up the forest,
starting from the minimal elements.  The set of minimal elements is
non-empty and countable, so we can assign positive real numbers $f(x)$ to
the minimal elements summing to~1.  Once we have chosen $f(x)$, we note
that there is at least one, but only countably many, upper covers of $x$,
so we can choose positive numbers $f(y)$, for the upper covers
$y$ of $x$, so that $f(x) = \sum_{y\cdot> x} f(y)$.

Note that, given any stem $A$ in $P$, the sum of the $f(x)$ over the
minimal elements of $P\sm A$ is~1.

Given a flow $f$, our rule defining an order-invariant causet process is:
from any state $x_1\cdots x_k$, and for any minimal element $x$ of
$P\sm \{x_1, \dots, x_k\}$, the probability of a transition to the state
$x_1\cdots x_kx$ is equal to $f(x)$.

To see that a process defined in this way is order-invariant, observe
that, if $a_1\cdots a_k$ is an ordered stem of $P$, then
$\mu(E(a_1a_2\cdots a_k)) = f(a_1)f(a_2) \cdots f(a_k)$, which depends
only on the stem $\{a_1,\dots, a_k\}$, and not on the order of its
elements.
\end{proof}

One can show, using the same ideas as in Example~1, that faithful extremal
order-invariant measures on an upward-branching forest are in 1-1
correspondence with flows through the forest.

A specific example is that where $P=(Z,<)$ is a countable union
$\bigcup_{i=1}^\infty C_i$ of infinite chains.  In this case, an extremal
order-invariant measure is specified by a probability distribution on the
index set $\N$: given non-negative numbers $p_1,p_2, \cdots$ summing to~1,
an order-invariant process on $P$ is defined by the rule that, at each
step, the next element in chain $C_i$ is chosen with probability $p_i$,
independent of all other choices.  This process is faithful if all the
$p_i$ are positive.  This is an example of a faithful order-invariant
measure on a causal set $P$ containing an infinite antichain.

\medskip

To conclude this section, we discuss the case where $\mu$ is an
order-invariant measure on $P =(Z,<)$, and an element $b\in Z$ is in the
random set $\Xi$ with probability strictly between~0 and~1.  In this
situation, we can construct two new causet measures $\mu^+$ and $\mu^-$ on
$P$ by conditioning on the events $b \in \Xi$ and $b \notin \Xi$
respectively:
$$
\mu^+(E) = \mu(E \mid b \in \Xi) =
\frac{\mu(E\cap\{ \omega\in \Omega^P: b \in \Xi(\omega)\})}
{\mu(\{\omega\in \Omega^P: b \in \Xi(\omega)\})},
$$
for all $E \in \cF^P$, and similarly for $\mu^-$.  Then
$\mu(\cdot) = \mu^+(\cdot) \mu(b\in\Xi) + \mu^-(\cdot) \mu(b\notin\Xi)$, a
convex combination of $\mu^+$ and~$\mu^-$.

\begin{prop} \label{split}
If $\mu$ is an order-invariant measure on $P=(Z,<)$, and $b$ is an element
of $Z$ with $0 < \mu (\{\omega : b \in \Xi(\omega)\}) < 1$, then the
measures $\mu^+$ and $\mu^-$ defined above are order-invariant.
\end{prop}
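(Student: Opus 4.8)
The plan is to check the order-invariance condition (\ref{eq:o-i}) for $\mu^+$ directly on basic events, and then obtain the statement for $\mu^-$ by a subtraction. Fix a stem $A = \{a_1,\dots,a_k\}$ of $P$ and a permutation $s$ of $[k]$ such that both $a_1\cdots a_k$ and $a_{s(1)}\cdots a_{s(k)}$ are linear extensions of $P_A$. Write $q = \mu(\{\omega : b \in \Xi(\omega)\}) \in (0,1)$. It suffices to show
$$
\mu\bigl(E(a_1\cdots a_k) \cap \{b \in \Xi\}\bigr) = \mu\bigl(E(a_{s(1)}\cdots a_{s(k)}) \cap \{b \in \Xi\}\bigr),
$$
since dividing by $q$ then gives $\mu^+(E(a_1\cdots a_k)) = \mu^+(E(a_{s(1)}\cdots a_{s(k)}))$. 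The corresponding identity for $\mu^-$ follows immediately, because $\mu(E(\cdot) \cap \{b \notin \Xi\}) = \mu(E(\cdot)) - \mu(E(\cdot) \cap \{b \in \Xi\})$, and on the right-hand side the first term is unchanged when we apply $s$ by order-invariance of $\mu$, and the second by what we have just proved.

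The point that needs care is that $\{b \in \Xi\}$ is a tail event, not measurable with respect to any $\cF^P_n$, so the order-invariance of $\mu$ (a statement about finite initial segments) cannot be applied to it directly. I get around this by approximating $\{b \in \Xi\}$ from inside. Since $\{b \in \Xi\} = \bigcup_{n \ge k} \{b \in \Xi_n\}$ is an increasing union, continuity of $\mu$ gives $\mu(E(a_1\cdots a_k) \cap \{b \in \Xi\}) = \lim_{n\to\infty} \mu(E(a_1\cdots a_k) \cap \{b \in \Xi_n\})$, and likewise for the permuted stem. So it is enough to prove, for each fixed $n \ge k$,
$$
\mu\bigl(E(a_1\cdots a_k) \cap \{b \in \Xi_n\}\bigr) = \mu\bigl(E(a_{s(1)}\cdots a_{s(k)}) \cap \{b \in \Xi_n\}\bigr).
$$

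Now $\{b \in \Xi_n\}$ is the disjoint union of the $\cF^P_n$-measurable events $\{\Xi_n = W\}$, over those $n$-element stems $W$ of $P$ with $b \in W$; moreover $E(a_1\cdots a_k) \cap \{\Xi_n = W\}$ (and its permuted analogue) is empty unless $A \subseteq W$, and the collection of relevant $W$ is the same for $a_1\cdots a_k$ and for $a_{s(1)}\cdots a_{s(k)}$ since $A$ is the same. Hence it suffices to show, term by term, that $\mu(E(a_1\cdots a_k) \mid \Xi_n = W) = \mu(E(a_{s(1)}\cdots a_{s(k)}) \mid \Xi_n = W)$ for every such $W$ (interpreting both conditional probabilities as $0$ when $\mu(\Xi_n = W) = 0$), and then sum against $\mu(\Xi_n = W)$.

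For this last equality I invoke (\ref{uniform2}) with the role of the stem there played by $W$ and the role of the ordered stem played by $a_1\cdots a_k$: since $a_1\cdots a_k$ is an ordered stem of $P$ all of whose elements lie in $W$, we get $\mu(E^P(a_1\cdots a_k) \mid \Xi_n = W) = \nu^W(E^{P_W}(a_1\cdots a_k))$, and similarly $\mu(E^P(a_{s(1)}\cdots a_{s(k)}) \mid \Xi_n = W) = \nu^W(E^{P_W}(a_{s(1)}\cdots a_{s(k)}))$ (note $a_{s(1)}\cdots a_{s(k)}$ is an ordered stem of $P_W$ because $A$ is a down-set of $P_W$ and $a_{s(1)}\cdots a_{s(k)}$ is a linear extension of $P_A$). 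Finally $\nu^W(E^{P_W}(a_1\cdots a_k)) = \nu^W(E^{P_W}(a_{s(1)}\cdots a_{s(k)}))$: both equal $1/e(P_W)$ times the number of linear extensions of $P_W$ with the given string as initial segment, and the map $a_1\cdots a_k\,w_{k+1}\cdots w_n \mapsto a_{s(1)}\cdots a_{s(k)}\,w_{k+1}\cdots w_n$ is a bijection between these two sets (it is well-defined in both directions precisely because $A$ is a down-set and both orderings of $A$ are linear extensions of $P_A$). This finishes the argument. I do not expect any genuine difficulty here; the only delicate step is the passage from the finite levels $\{b \in \Xi_n\}$ to the tail event $\{b \in \Xi\}$, which is why the inside-approximation is made explicit.
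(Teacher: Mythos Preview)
Your proof is correct. The overall architecture matches the paper's: reduce order-invariance of $\mu^+$ to the identity
$\mu(E(a_1\cdots a_k)\cap\{b\in\Xi\}) = \mu(E(a_{s(1)}\cdots a_{s(k)})\cap\{b\in\Xi\})$,
and then get $\mu^-$ by subtraction. Where you diverge is in how you establish this identity. You approximate $\{b\in\Xi\}$ by $\{b\in\Xi_n\}$, partition the latter according to the value of $\Xi_n$, and invoke the uniform-conditioning identity~(\ref{uniform2}). The paper instead decomposes $E(a_1\cdots a_k)\cap\{b\in\Xi\}$ directly (when $b\notin A$) as the countable disjoint union of basic events $E(a_1\cdots a_k c_1\cdots c_t b)$, indexed by the possible strings preceding the first appearance of $b$, and then applies the raw order-invariance condition~(\ref{eq:o-i}) to each such event. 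This avoids the limiting argument and the appeal to~(\ref{uniform2}) altogether; your route is a little longer but has the virtue of making explicit the passage from the finite $\sigma$-fields to the tail event. Either way the content is the same, and your argument is sound.
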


\begin{proof}
We start by showing that $\mu^+$ is order-invariant.  Suppose that
$a_1\cdots a_k$ and $a_{s(1)}\cdots a_{s(k)}$ are two ordered stems of
$P$, where $s$ is a permutation of $[k]$: our task is to show
that
$$
\mu(E(a_1\cdots a_k) \mid b \in \Xi) =
\mu(E(a_{s(1)}\cdots a_{s(k)}) \mid b \in \Xi).
$$
Since $\mu (\{\omega : b \in \Xi(\omega)\}) > 0$, this is equivalent to
$$
\mu(E(a_1\cdots a_k) \cap \{\omega : b \in \Xi(\omega)\}) =
\mu(E(a_{s(1)}\cdots a_{s(k)}) \cap \{\omega: b \in \Xi(\omega)\}).
$$
If $b$ is one of the $a_j$, this holds directly by order-invariance.  If
not, then the set $E(a_1\cdots a_k) \cap \{\omega: b \in \Xi(\omega)\}$
can be written as a countable disjoint union of events of the form
\newline $E(a_1\cdots a_k c_1 \cdots c_t b)$.  By order-invariance, each
such event has the same probability as the corresponding event
$E(a_{s(1)}\cdots a_{s(k)} c_1 \cdots c_t b)$; summing the probabilities
now gives the required result.

We can write
$$
\mu^- (E) = \frac{\mu(E) - \mu(b\in \Xi) \mu^+(E)}{\mu(b\notin \Xi)},
$$
for every $E \in \cF^P$.  Using this identity, the fact that
$\mu(b\notin \Xi) > 0$, and the order-invariance of $\mu$ and $\mu^+$, we
see that $\mu^-$ is also order-invariant.
\end{proof}

This result is analogous to Lemma~4.3.10 of Bovier~\cite{Bovier}.

One consequence of Proposition~\ref{split} is that, if $\mu$ is an
order-invariant measure on $P=(Z,<)$, and $b$ is an element of $Z$ such
that $P\sm U[b]$ has no infinite chain, then $\mu(b \in \Xi) = 1$.
Indeed, if not, then Proposition~\ref{split} says that $\mu^-$ is an
order-invariant measure on $P\sm U[b]$, in contradiction to
Proposition~\ref{maximal-absent}.

\section {Existence of Order-Invariant Measures} \label{sec:existence}

We have seen examples where there are one, none, or many (faithful)
order-invariant measures on a fixed labelled poset $P$.  We now give a
sufficient condition for the existence of an order-invariant measure on
$P$.

\begin{theorem} \label{thm:existence}
Let $P=(Z,<)$ be a causal set.  If $P\sm A$ has finitely many minimal
elements for each stem $A$ of $P$, then there is an order-invariant
measure on $P$.  More generally, if $P_Y$ has this property for some
infinite down-set $Y$ of $P$, then there is an order-invariant measure on
$P$.
\end{theorem}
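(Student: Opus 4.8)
The plan is to carry out, in full generality, the construction used for Example~1: approximate $P$ by finite down-sets, place the uniform measure on the linear extensions of each, and pass to a subsequential limit. The hypothesis that $P\sm A$ has finitely many minimal elements for every stem $A$ --- equivalently, by Theorem~\ref{thm:compact}, that $\Omega^P$ is compact --- is precisely what guarantees that the limiting set-function is a genuine (countably additive) measure rather than one that loses mass ``to infinity''.

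Concretely, I would fix an enumeration $z_1,z_2,\dots$ of $Z$, set $A_n=D[z_1]\cup\dots\cup D[z_n]$ (a finite down-set, with $A_1\subseteq A_2\subseteq\cdots$ and $\bigcup_n A_n=Z$), write $k_n=|A_n|$, and let $\nu^{A_n}$ be the uniform measure on linear extensions of $P_{A_n}$. For each $n$, let $\mu_n$ be the causet measure on $P$ obtained by sampling a linear extension of $P_{A_n}$ according to $\nu^{A_n}$ and then completing it to an element of $\Omega^P$ by a fixed deterministic rule (say, always appending the unused minimal element of least index in the enumeration); that this defines a measure on $(\Omega^P,\cF^P)$ follows from Lemma~\ref{to-check}, and for every ordered stem $\alpha$ of length $\ell\le k_n$ one has $\mu_n(E(\alpha))=\nu^{A_n}(E^{P_{A_n}}(\alpha))$. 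Since there are only countably many ordered stems of $P$ and each $\mu_n(E(\alpha))$ lies in $[0,1]$, a diagonal argument produces a subsequence $(\mu_{n_j})$ along which $\mu_{n_j}(E(\alpha))$ converges, to a limit I call $f(\alpha)$, for every ordered stem $\alpha$.

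Next I would verify that $f$ satisfies the two conditions of Lemma~\ref{to-check}. Condition~(i) is immediate. For condition~(ii), fix an ordered stem $a_1\cdots a_k$; by the hypothesis, $P\sm\{a_1,\dots,a_k\}$ has only finitely many minimal elements $b^{(1)},\dots,b^{(r)}$, and for every $j$ the events $E(a_1\cdots a_kb^{(i)})$ partition $E(a_1\cdots a_k)$, so $\sum_{i=1}^{r}\mu_{n_j}(E(a_1\cdots a_kb^{(i)}))=\mu_{n_j}(E(a_1\cdots a_k))$; being a \emph{finite} sum, this identity survives the passage $j\to\infty$. Hence Lemma~\ref{to-check} yields a causet measure $\mu$ on $P$ with $\mu(E(\alpha))=f(\alpha)$ for all $\alpha$. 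This finiteness is the crux of the proof and the sole point where the hypothesis is used; it is exactly what would fail if some stem $A$ had $P\sm A$ with infinitely many minimal elements. To see that $\mu$ is order-invariant, take ordered stems $\alpha=a_1\cdots a_k$ and $\beta=a_{s(1)}\cdots a_{s(k)}$ with common underlying set $A$; for every $j$ with $A\subseteq A_{n_j}$ we have $\mu_{n_j}(E(\alpha))=\nu^{A_{n_j}}(E^{P_{A_{n_j}}}(\alpha))$ and similarly for $\beta$, and these are equal because relabelling the first $k$ entries of a linear extension of $P_{A_{n_j}}$ according to $s$ is a bijection between those beginning with $\alpha$ and those beginning with $\beta$ (valid because $\alpha$ and $\beta$ are both linear extensions of the down-set $P_A$, so the constraints on the remaining entries depend only on $A$). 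Letting $j\to\infty$ gives $\mu(E(\alpha))=\mu(E(\beta))$.

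For the final sentence of the theorem, suppose $P_Y$ has the stated property for some infinite down-set $Y$; then $P_Y$ is itself a causal set, so by the first part there is an order-invariant measure $\mu_Y$ on $P_Y$. Because $Y$ is a down-set, $\Omega^{P_Y}$ sits inside $\Omega^P$ as the measurable set $\{\omega\in\Omega^P:\Xi(\omega)\subseteq Y\}$, the ordered stems of $P_Y$ are exactly those ordered stems of $P$ with all elements in $Y$, and $E^{P_Y}(\alpha)=E^P(\alpha)\cap\Omega^{P_Y}$ for such $\alpha$; transporting $\mu_Y$ to $(\Omega^P,\cF^P)$ via $\mu(E)=\mu_Y(E\cap\Omega^{P_Y})$ therefore gives a causet measure on $P$ whose order-invariance follows directly from that of $\mu_Y$ (ordered stems of $P$ whose underlying set is not contained in $Y$ receive measure $0$). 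I expect the only delicate point in the whole argument to be the one isolated above --- ensuring the subsequential limit is countably additive --- with the remaining steps being routine bookkeeping.
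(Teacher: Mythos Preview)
Your proof is correct and follows essentially the same route as the paper: take an increasing sequence of stems exhausting $Z$, use a diagonal argument to extract subsequential limits of $\nu^{A_n}(E(\alpha))$ for all ordered stems $\alpha$, and invoke the finiteness hypothesis to pass the identity $\sum_b \nu(E(\alpha b))=\nu(E(\alpha))$ to the limit, with order-invariance following termwise. The only cosmetic difference is that you first extend each $\nu^{A_n}$ to a causet measure $\mu_n$ on all of $\Omega^P$ via a deterministic completion, whereas the paper works directly with the numbers $\nu^{Z_n}(E(\alpha))$; this extra step is harmless but unnecessary, and your more explicit treatment of the ``more generally'' clause (transporting $\mu_Y$ along the inclusion $\Omega^{P_Y}\hookrightarrow\Omega^P$) spells out what the paper compresses into one line.
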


\begin{proof}
Suppose that $P\sm A$ has finitely many minimal elements for
each stem $A$ of $P$.

Let $Z_1 \subset Z_2 \subset \cdots$ be an increasing sequence of stems of
$P=(Z,<)$ whose union is $Z$.  Note that, for each ordered stem
$a_1 \cdots a_k$, $\nu^{Z_n}(E(a_1 \cdots a_k))$ is defined for all $n$
large enough that all the $a_j$ are in $Z_n$.

Since the set of all ordered stems of $P$ is countable, a standard
diagonalisation argument shows that there is a subsequence $(Z_{n_j})$ of
$(Z_n)$ such that $\lim_{j\to \infty} \nu^{Z_{n_j}}(E(a_1 \cdots a_k))$
exists for all ordered stems $a_1\cdots a_k$.

For each ordered stem $a_1\cdots a_k$, we now set
$$
\mu(E(a_1\cdots a_k)) =
\lim_{j\to\infty} \nu^{Z_{n_j}}(E(a_1 \cdots a_k));
$$
we claim that this defines an order-invariant measure on
$(\Omega^P,\cF^P)$.

For each ordered stem $a_1\cdots a_k$, the set $\{b_1,\dots,b_r\}$ of
minimal elements of $P\sm \{a_1, \dots, a_k\}$ is finite by assumption.
Provided $|Z_{n_j}| > k$, we have
$$
\sum_{i=1}^r \nu^{Z_{n_j}}(E(a_1 \cdots a_kb_i))
= \nu^{Z_{n_j}}(E(a_1 \cdots a_k)),
$$
so this identity also holds for the limit $\mu$.  (Note that
$\nu^{Z_n}(E(c_1\cdots c_t)) = 0$ unless all the $c_i$ are in $Z_n$.)
Thus, by Lemma~\ref{to-check}, $\mu$ is a causet measure on~$P$.

Checking that $\mu$ is order-invariant is also immediate: if
$a_1\cdots a_k$ is an ordered stem of $P$, and $s$ is a permutation of
$[k]$ such that $a_{s(1)}\cdots a_{s(k)}$ is also an ordered stem of $P$,
then
$$
\nu^{Z_{n_j}}(E(a_1 \cdots a_k)) =
\nu^{Z_{n_j}}(E(a_{s(1)} \cdots a_{s(k)}))
$$
for every $n_j$ for which these are defined, so this identity holds in
the limit too.

For the second statement in (1), we simply apply the first statement
to $P_Y$.
\end{proof}

\begin{corollary}
If $I(x)$ is finite for every element $x$ of $P$, then there is a
faithful order-invariant measure on $P$.
\end{corollary}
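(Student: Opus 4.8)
The plan is to deduce this quickly from Theorem~\ref{thm:existence}, after extracting the two structural consequences of the hypothesis that $I(x)$ is finite for every $x \in Z$.

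First I would observe that, if $I(x)$ is finite for every $x$, then $P$ contains no infinite antichain: every member of an antichain is incomparable with all the others, so an infinite antichain would force $I(x)$ to be infinite for each of its elements. Consequently, for every stem $A$ of $P$, the set of minimal elements of $P \sm A$ --- which is always an antichain --- is finite. Theorem~\ref{thm:existence} then applies directly (with $Y = Z$) and produces an order-invariant measure $\mu$ on $P$.

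It remains to verify that $\mu$ is faithful, i.e., that $\Xi(\omega) = Z$ for $\mu$-almost every $\omega$; in fact this holds for \emph{every} $\omega \in \Omega^P$, because the hypothesis forces $P$ to have no proper infinite down-set. To see the latter, suppose $Y$ were a proper infinite down-set and pick $z \in Z \sm Y$. Since $P$ is a causal set, $D(z)$ is finite, so all but finitely many elements $y$ of the infinite set $Y$ satisfy $y \not< z$; for such a $y$ we cannot have $z < y$ either, since $Y$ is a down-set and $z \notin Y$, so $y$ and $z$ are incomparable, i.e.\ $y \in I(z)$ --- contradicting the finiteness of $I(z)$. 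Now, for any $\omega = x_1x_2\cdots \in \Omega^P$, the set $\Xi(\omega) = \{x_1, x_2, \dots\}$ is an infinite down-set of $P$ (infinite because the $x_k$ are distinct, a down-set because $x_k$ is a minimal element of $P \sm X_{k-1}$), and hence must equal $Z$. Therefore $\mu$ is faithful.

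There is no genuine obstacle here: the whole content is the two elementary implications ``$I(x)$ finite for all $x$'' $\Rightarrow$ ``$P$ has no infinite antichain'' and ``$I(x)$ finite for all $x$'' $\Rightarrow$ ``$P$ has no proper infinite down-set'', the second of which was already noted in Section~\ref{sec:fnfp}. One could alternatively bypass Theorem~\ref{thm:existence} and repeat its proof, taking a diagonal subsequential limit of the uniform measures $\nu^{Z_n}$ over an exhausting sequence of stems $Z_n$ --- the no-infinite-antichain property is exactly what keeps the Kolmogorov consistency sums in Lemma~\ref{to-check} finite --- but citing Theorem~\ref{thm:existence} as a black box is cleaner.
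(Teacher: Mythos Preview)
Your proposal is correct and follows essentially the same route as the paper: both deduce the existence of an order-invariant measure from Theorem~\ref{thm:existence} via the observation that finite $I(x)$ for all $x$ precludes infinite antichains, and both obtain faithfulness from the fact that $P$ has no proper infinite down-set. The only difference is cosmetic: the paper refers back to the remark in Section~\ref{sec:fnfp} (phrased as ``each element $x$ is generated no later than step $|I(x)\cup D[x]|$''), whereas you spell out the contrapositive argument directly.
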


\begin{proof}
If $I(x)$ is finite for all $x \in P$, then there is certainly no
infinite antichain in $P$, and therefore the condition of
Theorem~\ref{thm:existence} is satisfied, and there is an order-invariant
measure on $P$.

Moreover, as we remarked at the beginning of Section~\ref{sec:fnfp}, a
causal set $P$ in which $I(x)$ is finite for every $x$ has no proper
infinite down-sets -- indeed, any causet process on $P$ generates each
element $x$ no later than step $|I(x) + D[x]|$ -- so all causet measures
on $P$ are faithful.
\end{proof}

If we think of two elements of $P=(Z,<)$ as ``interacting'' if they are
incomparable, then the condition that $I(x)$ is finite for every $x \in Z$
is analagous to the condition that an interaction in a spin system be
{\em regular} -- see Section~4.2 of Bovier~\cite{Bovier}, which suffices
for the existence of Gibbs measures in the context studied there (see
Corollary~4.2.17 of~\cite{Bovier}).

Example~4 illustrates these results: the poset $P$ of that example has no
infinite antichain, but there is one element $x$ with $I(x)$ infinite;
there is just one order-invariant measure on $P$, and it is not faithful.

The condition in Theorem~\ref{thm:existence} is certainly not necessary
for the existence of an order-invariant measure on $P$.  Indeed, we have
already seen examples -- see Proposition~\ref{prop:flow} and the remarks
after it -- where $P\sm A$ has infinitely many minimal elements for every
stem $A$, and yet there are infinitely many faithful extremal
order-invariant measures on $P$.

However, we do have the following result.

\begin{corollary}
Let $P=(Z,<)$ be a causal set.  Then the following are equivalent:
\begin{itemize}
\item[(1)] For every infinite down-set $Y$ of $Z$, there is an
order-invariant measure on $P_Y$.
\item[(2)] For every stem $A$ of $P$, $P\sm A$ has finitely many minimal
elements.
\end{itemize}
\end{corollary}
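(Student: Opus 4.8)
The plan is to prove the equivalence of (1) and (2) by establishing both directions, making heavy use of Theorem~\ref{thm:existence} and Theorem~\ref{thm:compact}.

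\smallskip

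For the direction (2)$\Rightarrow$(1): suppose every stem $A$ of $P$ has the property that $P\sm A$ has finitely many minimal elements. I would first observe that this property is inherited by $P_Y$ for any down-set $Y$: if $B$ is a stem of $P_Y$, then $B$ is also a stem of $P$, and the minimal elements of $P_Y\sm B$ are among the minimal elements of $P\sm B$, hence finite in number. Then Theorem~\ref{thm:existence} applied directly to $P_Y$ gives an order-invariant measure on $P_Y$. This direction is essentially immediate once the inheritance observation is made.

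\smallskip

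For the direction (1)$\Rightarrow$(2), I would argue by contraposition: suppose there is a stem $A$ of $P$ such that the set $M$ of minimal elements of $P\sm A$ is infinite. The key idea is to exhibit an infinite down-set $Y$ of $P$ such that $P_Y$ has no order-invariant measure, using Proposition~\ref{maximal-absent}: it suffices to find such a $Y$ with no infinite chain in $P_Y$. The natural candidate is $Y = A\cup M$ — this is a down-set because $A$ is a down-set and each element of $M$ covers only elements of $A$ (as the $M$-elements are minimal in $P\sm A$, so everything strictly below them lies in $A$). The down-set $Y$ is infinite since $M$ is. Moreover, every element of $M$ is maximal in $P_Y$: nothing in $A$ lies above it (since $A$ is a down-set, hence a down-set in $Y$), and no other element of $M$ is comparable to it (two distinct minimal elements of $P\sm A$ are incomparable). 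Thus the only chains in $P_Y$ have the form (a chain in $A$) possibly topped by one element of $M$; since $A$ is finite, all such chains are finite, so $P_Y$ has no infinite chain. By the final statement of Proposition~\ref{maximal-absent}, there is no order-invariant measure on $P_Y$, contradicting~(1).

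\smallskip

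The main obstacle — really the only subtle point — is verifying carefully that $Y = A\cup M$ is a down-set and that every element of $M$ remains maximal in $P_Y$; both hinge on the precise meaning of ``minimal element of $P\sm A$'' and the fact that $A$ is a down-set in $P$. Everything else (the inheritance of the finiteness property, the absence of infinite chains in $P_Y$) is routine once these structural facts are pinned down. I would take $Y$ itself (not $Z$) as the witnessing down-set in~(1), so no further manipulation is needed to derive the contradiction.
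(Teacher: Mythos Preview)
Your proof is correct and follows essentially the same route as the paper: for $(2)\Rightarrow(1)$ you apply Theorem~\ref{thm:existence} to $P_Y$ (after noting the inheritance of the finiteness property, which the paper leaves implicit), and for $\neg(2)\Rightarrow\neg(1)$ you take $Y=A\cup M$ and invoke Proposition~\ref{maximal-absent}, exactly as the paper does. The only discrepancy is that your opening sentence promises use of Theorem~\ref{thm:compact}, which neither you nor the paper actually uses.
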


\begin{proof}
That (2) implies (1) follows from applying Theorem~\ref{thm:existence} to
each $P_Y$, where $Y$ is an infinite down-set of $P$.

If (2) fails, then there is a stem $A$ such that the set $M$ of minimal
elements of $P\sm A$ is infinite.  Then $A\cup M$ is an infinite down-set of
$P$ with no infinite chains, so there is no order-invariant measure on
$P_{A\cup M}$, by Proposition~\ref{maximal-absent}.
\end{proof}

It is no accident that the condition of Theorem~\ref{thm:existence} for
the existence of an order-invariant measure is the same as that in
Theorem~\ref{thm:compact} for $\Omega^P$ to be compact.  Indeed, we can
use compactness to give an alternative proof of the first part of
Theorem~\ref{thm:existence}: we merely sketch this proof, which relies
on the theory of weak compactness -- see Billingsley~\cite{billingsley}.

Since the space $(\Omega^P, \cF^P)$ is compact, every family of measures
in $\cP= \cP(\Omega^P, \cF^P)$ is tight.  Thus, by Prohorov's Theorem,
every such family, and in particular the family $\nu^{Z_n}(\cdot)$ as
defined in the proof, is relatively compact for weak convergence.
Thus some sequence of measures $\nu^{Z_n}(\cdot)$ has a weak limit: we
showed in~\cite{BL1} that a weak limit of such measures is order-invariant.

Some ``compactness'' condition is required for either proof to work.
For instance, suppose $P=(Z,<)$ is an antichain, with
$Z = \{ z_1, z_2, \dots \}$, and set $Z_n = \{z_1, \dots, z_n\}$ for each
$n \in \N$.  Now, for each fixed $k$, $\nu^{Z_n}(E(z_k)) = 1/n$ for
$n \ge k$, so $\nu^{Z_n}(E(z_k)) \to 0$ as $n \to \infty$ for each
$z_k \in Z$, although $\sum _{k=1}^\infty \nu^{Z_n}E(z_k) = 1$ for each
$n$.  A similar issue is explored in Example (4.16) in~\cite{Georgii},
where a sequence of measures tends weakly to a limit that is not a
measure on the original space: the limiting measure can be seen as a
``point mass at infinity'' in the one-point compactification of the
originally non-compact space.

\bigbreak

\section {Uniqueness of Order-Invariant Measures} \label{sec:uoim}

Our purpose in this section is to give a sufficient condition on a causal
set $P$ for $P$ to admit a unique order-invariant measure.

The following result can be seen as an interpretation of a result
from Brightwell~\cite{Bri1}.

\begin{theorem} \label{thm:unique}
Let $P=(Z,<)$ be a causal set, and suppose there is some $k$ such that
$|I(x)| \le k$ for all $x\in P$.  Then there is a unique order-invariant
measure on $P$.
\end{theorem}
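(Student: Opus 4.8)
The plan is to establish existence first and uniqueness second, using the hypothesis $|I(x)| \le k$ to control everything. Since the bound on incomparabilities forbids an infinite antichain, Theorem~\ref{thm:existence} already gives an order-invariant measure on $P$, so only uniqueness needs work. The key structural consequence of the hypothesis is a kind of ``bounded memory'': I would show that whether a given element $x$ lies in the stem $\Xi_n$ becomes essentially determined once $n$ is a bounded amount larger than the level of $x$ in $P$. More precisely, fix a chain $Z_1 \subset Z_2 \subset \cdots$ of stems exhausting $Z$; I would argue that for any order-invariant measure $\mu$ and any fixed stem $A$, the conditional distribution of $\Xi_{|A|}$ given that $\Xi_{|A|+k+1}$ (say) contains all of $A$ is concentrated, because an element of $A$ can be ``overtaken'' in a natural extension only by one of its at most $k$ incomparable elements. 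This is the place where the uniform bound does the real work: it converts the local constraint into a genuine propagation-of-information statement, and it is the step I expect to be the main obstacle, since one must carefully track how far down the inserted element can push.

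Granting such a concentration estimate, the uniqueness argument runs as in Example~1. Let $\mu$ and $\mu'$ be two order-invariant measures on $P$. By~(\ref{uniform2}), for any ordered stem $a_1\cdots a_\ell$ with all $a_i$ in a stem $W$ of size $n$,
$$
\mu(E^P(a_1\cdots a_\ell)\mid \Xi_n = W) = \nu^W(E^{P_W}(a_1\cdots a_\ell)),
$$
and the same identity holds for $\mu'$. Hence $\mu(E^P(a_1\cdots a_\ell))$ is a weighted average of the numbers $\nu^W(E^{P_W}(a_1\cdots a_\ell))$ over the finitely many stems $W$ of size $n$ that contain $\{a_1,\dots,a_\ell\}$, with weights $a_W = \mu(\Xi_n = W)$, and likewise for $\mu'$ with weights $a'_W$. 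So it suffices to show that as $n \to \infty$ both the limiting weights and the values $\nu^W(E^{P_W}(a_1\cdots a_\ell))$ converge in a way that forces $\mu = \mu'$ on every basic event; since basic events form a separating class, this gives $\mu = \mu'$.

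To close the gap I would invoke the structure theory of~\cite{Bri1} directly, as the theorem statement advertises: the main result there shows that for a locally finite poset in which every element is incomparable with at most $k$ others, the measures $\nu^W$ on linear extensions of finite initial stems $W$ converge (the ``limit'' random linear extension is unique), and the weights $a_W$ are pinned down by this convergence. Concretely, I would translate~\cite{Bri1} into the present language to conclude that $\lim_{n\to\infty}\nu^{Z_n}(E(a_1\cdots a_\ell))$ exists and is independent of the exhausting sequence, and that any order-invariant $\mu$ must agree with this limit on every basic event --- the argument being the sandwiching in Example~1, now with the finitely-many-values observation replacing the explicit Fibonacci computation, and the concentration estimate above ensuring the sandwich closes. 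The main obstacle remains making the quantitative ``bounded overtaking'' estimate precise enough that the sandwich width tends to zero; everything else is bookkeeping with~(\ref{uniform}), (\ref{uniform2}) and Lemma~\ref{to-check}.
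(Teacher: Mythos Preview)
Your overall strategy --- existence via Theorem~\ref{thm:existence}, uniqueness by sandwiching as in Example~1, with \cite{Bri1} supplying the convergence --- is sound and can be made to work, but it diverges from the paper's argument in one important respect, and your execution has a confusion that should be flagged.

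The paper's proof is shorter because it routes through Theorem~\ref{thm:equivalence}.  From \cite{Bri1} one gets that, for \emph{every} $\omega \in \Omega^P$ (not merely almost every $\omega$), the sequence $\nu^n(E(a_1\cdots a_k))(\omega) = \nu^{\Xi_n(\omega)}(E(a_1\cdots a_k))$ converges to a limit $\mu(E(a_1\cdots a_k))$ independent of $\omega$; this uses that under the hypothesis every $\omega$ exhausts $Z$.  Now any \emph{extremal} order-invariant measure $\nu$ is essential by Theorem~\ref{thm:equivalence}, so $\nu^n(E(a_1\cdots a_k))(\omega) \to \nu(E(a_1\cdots a_k))$ for $\nu$-a.e.\ $\omega$; comparing the two limits forces $\nu = \mu$ on basic events.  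There is no sandwiching and no discussion of the weights $a_W$ at all.

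Your route avoids Theorem~\ref{thm:equivalence} and can be completed, but not via the mechanism you describe.  The claim that ``the weights $a_W$ are pinned down by this convergence'' is wrong: the weights $a_W = \mu(\Xi_n = W)$ depend on $\mu$, and you are trying to prove $\mu$ is unique.  What actually closes the sandwich is that $\max_W \nu^W(E(a_1\cdots a_\ell)) - \min_W \nu^W(E(a_1\cdots a_\ell)) \to 0$, the extrema taken over stems $W$ of size $n$; once you have this, the weighted average is pinned down regardless of the weights.  This uniform convergence follows from \cite{Bri1} together with the nesting fact that every stem of size $n+k$ contains every stem of size $n$ (if $x$ lies in some $n$-stem then $|D(x)|\le n-1$, and if $x\notin W'$ then $W'\subseteq D(x)\cup I(x)$ has size at most $n-1+k$), so any sequence of stems with sizes increasing by at least $k$ is an increasing exhausting sequence to which \cite{Bri1} applies.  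Your ``concentration of $\Xi_n$'' observation is in the same family --- indeed any two $n$-stems have symmetric difference at most $2k$ --- but it is the values $\nu^W(E)$, not the weights, that must be shown to coalesce.
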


\begin{proof}
For incomparable elements $a$ and $b$ of $P$, let $R(a,b)$ be the event
that $a$ appears below $b$ in a natural extension of $P$.  Formally,
$R(a,b) = \{ \omega \in \Omega^P:
\exists i<j, \xi_i(\omega)=a, \xi_j(\omega)=b\}$.

Suppose $P$ satisfies the condition of the theorem.  It is proved
in~\cite{Bri1} that, for any increasing sequence $(Z_1,Z_2,\dots)$ of
stems in $P=(Z,<)$, whose union is $Z$, and any Boolean combination $R$ of
events of the form $R(a,b)$, the limit, as $n \to \infty$, of
$\nu^{Z_n}(R)$ exists, and is independent of the choice of sequence
$(Z_n)$.

Each basic event $E(a_1\cdots a_k)$ can be written as an intersection of
events $R(a,b)$.  Also, for any $\omega = x_1x_2 \cdots \in \Omega^P$,
the union of the sequence $(X_1,X_2,\dots)$ of stems is $Z$.  Therefore,
for each ordered stem $a_1\cdots a_k$, and each $\omega \in \Omega^P$,
the result of~\cite{Bri1} tells us that $\nu^{X_n}(E(a_1\cdots a_k))$
tends to a limit, which we denote $\mu(E(a_1\cdots a_k))$, independent of
the sequence $(X_n)$.

As in the proof of Theorem~\ref{thm:existence}, this limit $\mu$ is an
order-invariant causet measure on $P$.

Moreover, for {\em every} $\omega \in \Omega^P$,
$\nu^n(E(a_1 \cdots a_k)(\omega)$ tends to $\mu(E(a_1\cdots a_k))$.
Every extremal order-invariant measure $\nu$ on $P$ is essential, by
Theorem~\ref{thm:equivalence}, and so $\nu$ must agree with $\mu$ on the
separating class consisting of the basic events $E(a_1\cdots a_k)$, and
therefore $\nu = \mu$.

Thus there is only one extremal order-invariant measure on $P$, namely
$\mu$.
\end{proof}

The condition that $I(x)$ be uniformly bounded in Theorem~\ref{thm:unique}
is reminiscent of Dobrushin's uniqueness criterion for interacting
particle systems (see~\cite{Bovier} or~\cite{Georgii}), in that it bounds
the strength of interactions.

\bigbreak

\noindent {\bf Example 5}. \quad
An example in Brightwell~\cite{Bri1} shows that just having all the
$I(x)$ finite is not sufficient to guarantee a unique order-invariant
measure.

To construct this example, we start with $P_1$ the one-element poset on
$Z_1 = \{a\}$ and $P_2$ the two-element antichain $Z_2 = \{a,b\}$.
Each $P_n$, $n\ge 3$, is constructed from $P_{n-1}$ by adding a chain of
$m_n$ elements above the elements of $Z_{n-2}$ and incomparable with the
chain $Z_{n-1}\sm Z_{n-2}$, where $m_n$ grows rapidly with $n$
($m_n = 2^{2^n}$ suffices).  The infinite poset $P$ is the union of the
$P_n$.  The point is that, as $m_n$ is much larger than $m_{n-1}$, most
linear extensions of the poset $P_n$ have the elements of $Z_{n-2}$, in
some order, as an initial segment, so $\nu^{Z_n}(E^{P_n}(a))$ can be made
as close as is desired to $\nu^{Z_{n-2}}(E^{P_{n-2}}(a))$, for each
$n\ge 3$.  Thus $\nu^{Z_{2n}}(E^{P_{2n}}(a))$ and
$\nu^{Z_{2n+1}}E^{P_{2n+1}}(a))$ tend to different limits as
$n \to \infty$.  The proof of Theorem~\ref{thm:existence} then implies
that there are at least two different order-invariant measures.  These
measures are necessarily faithful, as all the $I(x)$ are finite in this
example.

For details, see~\cite{Bri1}.

\bigbreak

On the other hand, the condition in Theorem~\ref{thm:existence} is not
necessary for the uniqueness of an order-invariant measure on a causal set
$P$.
For instance, one can build a causal set by stacking finite posets on top
of one another, with all elements of one poset in the stack being above all
elements of all posets below it.  It is easy to see that such a poset admits
a unique order-invariant measure, constructed in an obvious way from the
uniform measures on linear extensions of each poset in the stack.  This
class includes examples in which there is no uniform bound on $|I(x)|$.

\section {Downward-branching trees} \label{sec:dbt}

A {\em downward-branching forest} is a causal set in which every element
has exactly one upper cover (equivalently, for each element $x$, $U[x]$ is
a chain).  A {\em downward-branching tree}, or simply {\em tree}, is a
downward-branching forest with just one component, i.e., such that every
two elements have a common upper bound.

Our purpose in this section is to characterise the trees $T=(Z,<)$ that
admit an order-invariant measure.  Such a measure $\mu$ must be faithful:
for any element $x \in Z$, there is no infinite chain in $Z\sm U[x]$ (if
the infinite chain $U[y]$ is disjoint from $U[x]$, then $x$ and $y$ have
no common upper bound), and so, by the remark after
Proposition~\ref{split}, $\mu(x \in \Xi)=1$.

Before giving this characterisation, we state and prove two simple general
lemmas that we shall need in the course of the proof, and later.

\begin{lemma} \label{non-zero}
Let $P=(Z,<)$ be a causal set, and let $a_1a_2\cdots a_k$ be any ordered
stem of $P$.  If $\mu$ is an order-invariant measure on $P$ such that,
with positive probability, all the $a_i$ appear, then
$\mu(E(a_1a_2\cdots a_k))> 0$.

In particular, if $a$ is a minimal element of $P$, then either
$\mu(E(a))>0$, or $a$ is absent.
\end{lemma}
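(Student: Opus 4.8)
The plan is to argue by contraposition on a well-chosen conditional probability. Suppose $\mu$ is order-invariant, $a_1\cdots a_k$ is an ordered stem of $P$, and suppose that with positive probability all of the $a_i$ appear in $\omega$; I want to conclude $\mu(E(a_1\cdots a_k))>0$. The natural first step is to find a stem $A$ of $P$ containing all the $a_i$ such that $\mu(\Xi_n = A)>0$ for $n=|A|$; this exists because the event ``all of $a_1,\dots,a_k$ appear'' has positive probability, so for some $n$ the event ``all $a_i$ lie among the first $n$ elements'' has positive probability, and this event is a finite disjoint union (over the finitely many $n$-element stems $A \supseteq \{a_1,\dots,a_k\}$ — there are only finitely many since each is determined by a choice of $n-k$ further elements forming a stem, though we only need countably many and positivity forces at least one to have positive mass) of the events $\{\Xi_n = A\}$. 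Hence some such $A$ has $a_W := \mu(\Xi_n = A)>0$.

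Now fix that stem $A$, with $|A| = n \ge k$. Since $\{a_1,\dots,a_k\} \subseteq A$, and $a_1\cdots a_k$ is an ordered stem of $P$, it is in particular an ordered stem of the finite poset $P_A$: the set $\{a_1,\dots,a_k\}$ is a down-set of $P$, hence of $P_A$, and $a_1\cdots a_k$ linearly extends its restriction. Therefore $e(P_A)\ge 1$ linear extensions of $P_A$ have $a_1\cdots a_k$ as an initial segment — at least one, obtained by completing $a_1\cdots a_k$ to a linear extension of $P_A$. By the order-invariance identity (\ref{uniform2}),
\[
\mu(E^P(a_1\cdots a_k) \mid \Xi_n = A) = \nu^A(E^{P_A}(a_1\cdots a_k)) \ge \frac{1}{e(P_A)} > 0.
\]
Consequently
\[
\mu(E^P(a_1\cdots a_k)) \ge \mu(E^P(a_1\cdots a_k) \mid \Xi_n = A)\,\mu(\Xi_n = A) = \nu^A(E^{P_A}(a_1\cdots a_k))\,a_W > 0,
\]
which is the desired conclusion.

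For the ``in particular'' clause: if $a$ is a minimal element of $P$, then the one-element string $a$ is an ordered stem. If $a$ is not absent, then $a\in\Xi$ with positive probability, so the hypothesis of the first part is met with $k=1$, $a_1 = a$, giving $\mu(E(a))>0$; otherwise $a$ is absent by definition. I do not expect any serious obstacle here — the only point needing a little care is extracting the single positive-mass stem $A$ from the positive-probability event that all $a_i$ appear, and the observation that an ordered stem of $P$ remains an ordered stem of any finite restriction $P_A$ containing its elements, so that (\ref{uniform2}) applies with a strictly positive right-hand side.
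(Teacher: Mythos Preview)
Your proof is correct and follows essentially the same approach as the paper's: both find a positive-measure ordered stem (equivalently, a stem $A$ with $\mu(\Xi_n=A)>0$) containing all the $a_i$, and then invoke order-invariance to see that the rearrangement beginning $a_1\cdots a_k$ also has positive measure. The only cosmetic difference is that the paper works directly with a single basic event $E(b_1\cdots b_j)$ and applies the defining identity~(\ref{eq:o-i}), whereas you group by $\{\Xi_n=A\}$ and cite the derived formula~(\ref{uniform2}); note also that your parenthetical ``finitely many'' is not generally true, but as you immediately observe, countably many suffices.
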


\begin{proof}
The event that all the $a_i$ appear is a countable union of events of the
form $E(b_1b_2 \cdots b_j)$, where all the $a_i$ appear in the set
$B=\{b_1, \dots, b_j\}$.  Thus at least one such event has positive
probability.  Now, there is a linear extension $b_{s(1)}\cdots b_{s(j)}$
of $P_B$ with initial segment $a_1\cdots a_k$.  We see that
$$
\mu(E(a_1a_2\cdots a_k)) \ge \mu(E(b_{s(1)} \cdots b_{s(j)})) =
\mu(E(b_1b_2\cdots b_j)) > 0,
$$
as required.
\end{proof}

\begin{lemma} \label{delete-stem}
Let $\mu$ be a faithful order-invariant measure on $P=(Z,<)$ and let
$A$ be any stem of $P$.  Take any linear extension $a_1\dots a_m$ of $P_A$.
For any ordered stem $b_1\cdots b_k$ of $P\sm A$, define
$$
\mu_A(E(b_1\cdots b_k)) =
\frac {\mu(E(a_1\cdots a_mb_1 \cdots b_k)}{\mu(E(a_1\cdots a_m))}.
$$
Then $\mu_A$ is a faithful order-invariant measure on $P\sm A$.
\end{lemma}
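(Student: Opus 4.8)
The guiding idea is that $\mu_A$ should be nothing but the conditional measure $\mu(\,\cdot\mid E^P(a_1\cdots a_m))$ with the deterministic prefix $a_1\cdots a_m$ stripped off. Concretely, the plan is to introduce the prefix-deletion map $\Phi\colon E^P(a_1\cdots a_m)\to\Omega^{P\sm A}$ sending $a_1\cdots a_mx_{m+1}x_{m+2}\cdots$ to $x_{m+1}x_{m+2}\cdots$, and to show that $\mu_A=\Phi_*\bigl(\mu(\,\cdot\mid E^P(a_1\cdots a_m))\bigr)$; all four assertions (that $\mu_A$ is well-defined, that it is a causet measure, that it is order-invariant, and that it is faithful) then follow. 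Well-definedness is immediate: since $\mu$ is faithful, all the $a_i$ appear a.s., so Lemma~\ref{non-zero} gives $\mu(E^P(a_1\cdots a_m))>0$. (It is also worth recording, though the statement does not demand it, that $\mu_A$ does not depend on which linear extension $a_1\cdots a_m$ of $P_A$ one fixes: prefixing two such extensions to a common ordered stem of $P\sm A$ yields two ordered stems of $P$ with the same underlying set, so order-invariance of $\mu$ equates the numerators, and the denominators.)

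The one place that needs care — and really the only combinatorial content — is a short dictionary between $P$ and $P\sm A$: if $b_1\cdots b_k$ is an ordered stem of $P\sm A$, then $a_1\cdots a_mb_1\cdots b_k$ is an ordered stem of $P$; and for each $j$ the poset $(P\sm A)\sm\{b_1,\dots,b_j\}$ is literally $P\sm(A\cup\{b_1,\dots,b_j\})$, hence has the same minimal elements. The first claim uses that $A$ is a down-set (so no element of $A$ lies above an element of $Z\sm A$, and the concatenation respects the order) together with the fact that each $A\cup\{b_1,\dots,b_j\}$ is a down-set of $P$. With this dictionary in hand, $\Phi$ is a bijection whose inverse prepends $a_1\cdots a_m$, with $\Phi^{-1}(E^{P\sm A}(b_1\cdots b_k))=E^P(a_1\cdots a_mb_1\cdots b_k)$; so $\Phi$ is measurable, the pushforward $\Phi_*\bigl(\mu(\,\cdot\mid E^P(a_1\cdots a_m))\bigr)$ agrees with $\mu_A$ on every basic event of $\Omega^{P\sm A}$, and therefore equals $\mu_A$ since the basic events form a separating class. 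As a pushforward of a probability measure, $\mu_A$ is automatically a measure on $(\Omega^{P\sm A},\cF^{P\sm A})$; if one prefers to avoid the bijection here, the same conclusion comes from Lemma~\ref{to-check}, with (i) trivial and (ii) for $\mu_A$ deduced from (ii) for $\mu$ applied to the ordered stem $a_1\cdots a_mb_1\cdots b_k$, the dictionary matching the ranges of summation.

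Order-invariance of $\mu_A$ follows by the same mechanism: if $b_1\cdots b_k$ and $b_{s(1)}\cdots b_{s(k)}$ are both ordered stems of $P\sm A$, the dictionary makes $a_1\cdots a_mb_1\cdots b_k$ and $a_1\cdots a_mb_{s(1)}\cdots b_{s(k)}$ ordered stems of $P$ with a common underlying set, so order-invariance of $\mu$ equates their $\mu$-measures, and dividing through by $\mu(E^P(a_1\cdots a_m))$ gives~(\ref{eq:o-i}) for $\mu_A$. For faithfulness, note that since $\mu$ is faithful and $\mu(E^P(a_1\cdots a_m))>0$ we have $\mu(\Xi=Z\mid E^P(a_1\cdots a_m))=1$; the entries of any element of $\Omega^P$ are distinct, so under $\Phi$ the event $\{\Xi=Z\}$ corresponds exactly to $\{\Xi=Z\sm A\}\subseteq\Omega^{P\sm A}$, whence $\mu_A(\Xi=Z\sm A)=1$. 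Thus the only real obstacle is the bookkeeping in the $P$-to-$(P\sm A)$ dictionary; once that is settled, each remaining step is a one-line consequence of Lemmas~\ref{to-check} and~\ref{non-zero}, the order-invariance of $\mu$, and the separating-class property of basic events.
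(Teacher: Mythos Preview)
Your proof is correct and follows essentially the same approach as the paper: well-definedness via Lemma~\ref{non-zero}, the measure property via Lemma~\ref{to-check} (with the summation condition inherited from $\mu$ because the minimal elements of $(P\sm A)\sm\{b_1,\dots,b_k\}$ coincide with those of $P\sm(A\cup\{b_1,\dots,b_k\})$), and order-invariance and faithfulness read off directly from the definition. The paper's version is terser and does not spell out the pushforward/bijection viewpoint or the ``dictionary'', but your added detail is all sound and the underlying argument is the same.
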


\begin{proof}
Note first that $\mu(E(a_1\cdots a_m)) > 0$, by Lemma~\ref{non-zero}, so
$\mu_A$ is well-defined.  Also, by order-invariance, it is independent
of the choice of the linear extension of $P_A$.

For any ordered stem $b_1\cdots b_k$, we need to check that the sum,
over all minimal elements $b$ of $P \sm (A \cup \{ b_1, \dots, b_k\})$,
of $\mu_A(E(b_1\cdots b_kb))$ is equal to $\mu_A(E(b_1\cdots b_k))$; this
is immediate from the definition, since $\mu$ satisfies the analogous
property.

Thus, by Lemma~\ref{to-check}, $\mu_A$ is a causet measure on $P\sm A$.
Order-invariance and faithfulness are immediate from the definition.
\end{proof}

Let $T = (Z,<)$ be a downward-branching tree.  Let $C: x_0<x_1< \cdots$ be
an arbitrary maximal chain in $T$: the minimal element $x_0$ determines
this chain $C$ uniquely as the chain $U[x_0]$ of elements above $x_0$.

For $i \ge 1$, set $B_i = D(x_i)$ and $A_i = D(x_i) \sm D[x_{i-1}]$.  Thus
$A_i$ is the finite forest of elements ``hanging off'' $C$ at $x_i$.  The
sets $A_i$ partition $T \sm C$.  Also, for each $i$,
$B_i = D(x_i) = A_i \cup D[x_{i-1}]$, and these two sets $A_i$ and
$D[x_{i-1}]$ have no comparabilities between them.
See Figure~\ref{fig:tree}.

\begin{figure} [hbtp]
\epsfxsize300pt
$$\epsfbox{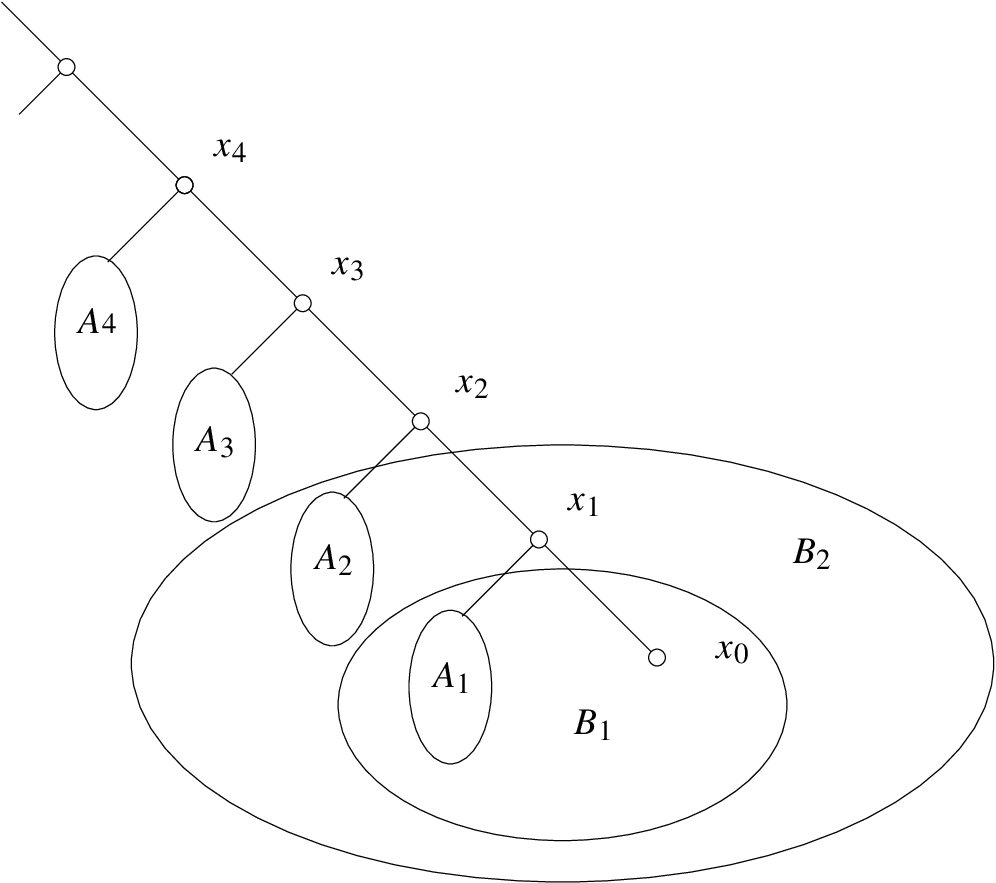}$$
\caption{A downward-branching tree} \label{fig:tree}
\end{figure}

Set $a_i = |A_i|$, $b_i = |B_i|$, and $t_i = a_i/b_i$, for each $i\ge 0$.
So $t_i$ is the proportion of elements below $x_i$ that are in subtrees
other than $D[x_{i-1}]$.

\begin{prop} \label{tree}
A tree $T=(Z,<)$ admits an order-invariant measure if and only if
$\sum_{i=0}^\infty t_i$ converges.  If the sum is convergent, there is
just one order-invariant measure on~$T$.
\end{prop}

This convergence condition is quite strong: a tree $T$ such that
$\sum t_i$ converges can be thought of as consisting of one chain $C$ with
elements hanging off it at widely spaced intervals.  For instance, if each
$a_i$ is~1, so that there is one minimal element hanging off each element
in the chain, then $b_i = 2i$ for each $i$, so $t_i = 1/2i$, and
$\sum t_i$ is divergent.  This is therefore an example of a causal set
with an infinite chain admitting no order-invariant measure.

\begin{proof}
We first note that the convergence condition is invariant under choice of
the maximal chain: given any two chains, defined by their minimal
elements, the elements have a least upper bound, which appears in both
chains, and the sequence $(t_i)$ is the same in both chains beyond this
point.

We will now show that the convergence condition is invariant under the
removal of a minimal element $x$.  Unless $T$ is a single chain -- in
which case the condition is satisfied both before and after removing the
unique minimal element $x$ -- we can choose a reference chain $C$ in
which $x$ is in one of the $A_i$.  Removing $x$ has the effect of reducing
the one term $t_i$, and increasing all subsequent terms $t_j$ by at most a
factor of~2, so the convergence of $\sum t_i$ is not affected.

We deduce moreover that the convergence condition is invariant under the
removal of any finite down-set of $T$.

Suppose that $T$ admits an order-invariant measure $\mu$, and consider the
event $E^T(x_0)$ that $x_0$ is the bottom element in a random linear
extension.  By Lemma~\ref{non-zero}, $\mu (E^T(x_0)) > 0$.

Our basic intuition is that an order-invariant measure $\mu$ on $T$, if
it exists at all, has to be the limit of the measures $\nu^{D[x_j]}$,
as $j \to \infty$.
(Indeed, if there is an order-invariant measure, then there is an extremal
one, which is essential by Theorem~\ref{thm:equivalence}, and therefore
is certainly a limit of {\em some} sequence of measures $\nu^{D_k}$, where
$(D_k)$ is an increasing sequence of down-sets of $T$.)  Accordingly, our
next step is to fix $j \ge 1$ and analyse the family of linear extensions
of $T_{D[x_j]}$, which we call $T_j$ for convenience.  As $x_j$ is the
unique maximal element of $T_j$, a linear extension of $T_j$ consists of a
linear extension of $T_{D(x_j)}$ with $x_j$ appended, so we may focus
instead on the family of linear extensions of $T_{D(x_j)}$.

In $T_{D(x_j)}$, there are no comparabilities between the sets $A_j$ and
$D[x_{j-1}]$, so a linear extension of $T_{D(x_j)}$ is determined uniquely
by: (i)~a linear extension of $T_{A_j}$, (ii)~a linear extension of
$T_{j-1}$, and (iii)~a set $I$ of $a_j$ elements of $[b_j]$.  Given
these three ingredients, the linear extension of $T_{D(x_j)}$ can be
formed by mapping the elements of $A_j$ to the elements of $I$, in the
order given by the linear extension from~(i), then mapping the elements of
$D[x_{j-1}]$ to the elements of $[b_j]\sm I$, in the order given by the
linear extension from~(ii).

The event that, in a uniformly random linear extension $\zeta$ of
$T_{D(x_j)}$, the bottom element $\zeta_1$ is in $D[x_{j-1}]$, depends
only on the set $I$, and its probability is just the probability
that $1 \not \in I$, which is $(b_j-a_j)/b_j = 1-t_j$.

Furthermore, the event that the lowest element of $D[x_{j-1}]$ is $x_0$,
in a uniformly random linear extension of $T_j$, depends only on the
linear extension of $T_{j-1}$ chosen in part~(ii) of the process
described above, so this event is independent of the event that the
overall bottom element in the linear extension of $T_j$ is in
$D[x_{j-1}]$.  Hence we have
$$
\nu^{D[x_j]}(E^{T_j}(x_0)) = (1-t_j) \nu^{D[x_{j-1}]}(E^{T_{j-1}}(x_0)),
$$
and it follows by induction that
$$
\nu^{D[x_j]}(E^{T_j}(x_0)) = \prod_{i=1}^j (1-t_i).
$$

Moreover, if $W$ is any stem including $x_j$ (and therefore all of
$D[x_j]$), then $\nu^W(E^{T_W}(x_0)) \le \prod_{i=1}^j (1-t_i)$, as the
product is the probability that $x_0$ is the lowest element of $D[x_j]$ in
a uniformly random linear extension of $T_W$.

For $j,n\in \N$, let $A_{j,n} = \{ \omega: x_j \in \Xi_n(\omega)\}$.
We have that, for $\omega \in A_{j,n}$,
$$
\nu^{\Xi_n(\omega)}(E^{T_{\Xi_n}}(x_0)) \le \prod_{i=1}^j (1-t_i).
$$
For any $j\in \N$ and $\eps > 0$, we may take $n$ sufficiently large that
$\mu(A_{j,n}) > 1-\eps$.  Now, by (\ref{uniform2}), we have that
$$
\mu(E^T(x_0)) = \sum_X \mu(E^T(x_0) \mid \Xi_n = X) \, \mu(\Xi_n = X)
= \sum_X \nu^X (E^{T_X}(x_0)) \, \mu(\Xi_n = X),
$$
where the sum is over all stems $X$ of $T$ of size $n$.  Now we have
$$
\mu(E^T(x_0)) \le \sum_{X : x_j \notin X} \mu(\Xi_n = X) +
\sum_{X: x_j \in X} \nu^X (E^{T_X}(x_0)) \, \mu(\Xi_n = X)
\le \eps + \prod_{i=1}^j (1-t_i).
$$
As both $\eps$ and $j$ are arbitrary, we conclude that
$\mu(E^T(x_0)) \le \prod_{i=1}^\infty (1-t_i)$, which is positive if and
only if $\sum t_i$ converges.

This proves that, if $T$ admits an order-invariant process, then
$\sum t_i$ converges.

Indeed, we can extract more information from the argument above.  Suppose
that $\sum t_i$ does converge.  For any minimal element $x$, decompose the
tree using the reference chain $C=U[x]$, calculate the constants
$t_i =t_i(x)$ for this chain $C$, and set
$p_T(x) = \prod_{i=1}^\infty (1-t_i(x))$.  We have seen that
$\mu(E^T(x)) \le p_T(x)$, for any order-invariant measure $\mu$ on $T$.

We claim that the sum of the $p_T(x)$ over all minimal $x$ is equal to~1.
This will imply that $\mu(E^T(x)) = p_T(x)$, for any order-invariant
measure $\mu$ on $T$, and any minimal element $x$.

Note first that, for each fixed $j$, we have
$\sum_{x\in M_j} \prod_{i=1}^j (1-t_i(x)) =1$, where the sum is over the
set $M_j$ of minimal elements of $D[x_j]$, as $\prod_{i=1}^j (1-t_i(x))$
is the probability that $x$ is the bottom element in a random linear
extension of $T_j$.

Therefore $\sum_{x\in M_j} p_T(x) = \sum_{x\in M_j}
\prod_{i=1}^\infty (1-t_i(x)) \le 1$, for each $j$.  It follows
that the sum of $p_T(x)$ over all minimal elements of $T$ is at most~1.

To see the reverse inequality, we fix any $\eps > 0$.  As $\sum t_i$
converges, there is some $n$ such that
$\prod_{i=n+1}^\infty (1-t_i) > 1 -\eps$.  Now, for all $x \in M_n$,
$t_i(x) = t_i$ for $i\ge n$.  Therefore
$$
\sum _{x \in M_n} p_T(x) = \sum _{x\in M_n}
\prod_{i=1}^n (1-t_i(x)) \prod_{i=n+1}^\infty (1-t_i) >
(1-\eps) \sum_{x\in M_n} \prod_{i=1}^n (1-t_i(x)) = 1-\eps.
$$

What this shows is that, if there is an order-invariant measure $\mu$
on $T$, then $\mu(E^T(x))$ must be equal to $p_T(x)$ for every minimal
element $x$ of $T$.

Furthermore, from any state $a_1\cdots a_k$, with
$A = \{a_1, \dots, a_k\}$, all subsequent transitions must be those of an
order-invariant process on $T\sm A$, also a downward-branching tree, by
Lemma~\ref{delete-stem}.  Therefore the probabilities for the next
transition are necessarily obtained by selecting the next minimal element
to be $x$ with probability $p_{T\sm A}(x)$.

This proves that, in the case where $\sum t_i$ converges, there is at most
one order-invariant process on $T$, namely the one described above, with
the rule that, if we have so far selected the elements of the stem $A$,
then the probability that a minimal element $x$ of $T\sm A$ is the next
element selected is $p_{T\sm A}(x)$.

It remains to show that this process is order-invariant.

The process is, by its definition, order-Markov.  We need to check that,
after the deletion of some stem $A$, $E^{T\sm A}(yz)$ and $E^{T\sm A}(zy)$
have the same probabilities, whenever $y$ and $z$ are minimal elements
of $T\sm A$.  Without loss of generality, $A = \emptyset$ and $y=x_0$.  We
choose $n$ so that $z < x_n$.

We see that
$$
\mu(E^T(yz)) = p_T(y) p_{T\sm \{y\}}(z) =
\prod_{i=1}^\infty (1-t_i(y)) \prod_{i=1}^\infty (1-t'_i(z)),
$$
where the $t_i$ are calculated in $T$, and the $t'_i$ in $T \sm \{y\}$.
Similarly
$$
\mu(E^T(zy))= \prod_{i=1}^\infty (1-t_i(z))
\prod_{i=1}^\infty (1-t''_i(y)),
$$
where the $t''_i$ are calculated in $T\sm \{z\}$.  In each product, all the
terms beyond the $n$th are identical, so we need to prove that
$$
\prod_{i=1}^n (1-t_i(y)) \prod_{i=1}^n (1-t'_i(z)) =
\prod_{i=1}^n (1-t_i(z)) \prod_{i=1}^n (1-t''_i(y)).
$$
But these products are exactly $\nu^{D[x_n]}(E^{T_n}(yz))$ and
$\nu^{D[x_n]}(E^{T_n}(zy))$ respectively, so they are indeed equal.
\end{proof}

One explicit way of realising the unique order-invariant measure in the
case when $\sum t_i$ converges is as follows.  Again, we need only
describe how to generate the first element.  Choose a reference chain $C$
with minimal element $x_0$, and define the $A_i$ with respect to $C$ as
before.  Mark each set $A_i$ with probability $t_i$, independently of
other marks.  Note that no empty $A_i$ is marked, and, by the
Borel-Cantelli Lemma, since $\sum t_i$ is finite, there are a.s.\ only a
finite number of marked $A_i$.  If there are any marked sets, let $A_k$ be
the last marked set, take a uniformly random linear extension of the
finite poset $A_k$, and select the bottom element of this linear extension
as our first element.  If there are no marked sets, we choose $x_0$ as our
first element.  We omit the detailed analysis.

\section{The Two-Dimensional Grid Poset} \label{sec:grid}

Let $G=(\N \times \N,<)$ be the infinite two-dimensional grid poset, with
$(a,b) \le (c,d)$ if $a\le c$ and $b\le d$.  This is a causal set, with
unique minimal element $(1,1)$.

This example is studied in detail in papers of Gnedin and Kerov~\cite{GK},
Kerov~\cite{Kerov} and Vershik and Tsilevich~\cite{VT}.  Our account will
be a sketch only.

As $G$ has no infinite antichain, Theorem~\ref{thm:existence} tells us
that there is an order-invariant measure on $G$ -- however, this is
actually trivial in this case, as the chain $H=\{ (a,1):a\in \N\}$ forms
an infinite down-set in $G$, and the process that always selects the next
element of $H$ is certainly order-invariant.

In fact, there is a faithful order-invariant measure on $G$.  Although
$I(x)$ is infinite for all elements $x$ of $G$ except the unique minimum,
the method used in the proof of Theorem~\ref{thm:existence} can be used
directly to construct such a measure.  If we take $Z_n = [n]\times [n]$, a
down-set in $G$, for each $n \in \N$, then the numbers of linear
extensions of subposets of $G_{Z_n}$ can be calculated using the hook
formula of Frame, Robinson and Thrall~\cite{FRT}, and so it is possible to
write down an expression for $\nu^{Z_n}(E(a_1\cdots a_k))$ for each $n$
and any ordered stem $a_1\cdots a_k$.  It turns out that
$\nu^{Z_n}(E(a_1\cdots a_k))$ converges to a positive limit for each
ordered stem $a_1\cdots a_k$, and so the limit is a faithful
order-invariant measure on $G$.  This measure is the well-known
{\em Plancherel measure} (see, for instance, \cite{AD,VK}).

However, this is far from the only faithful order-invariant measure on
$G$.  For example, for $\alpha \in (0,1)$, we construct an order-invariant
measure as follows.  We decompose $G$ as the union of the chain
$H=\{ (a,1):a\in \N\}$, and $G\sm H$, which is isomorphic to $G$.  On the
poset formed as the disjoint union $H \cup (G\sm H)$, where the relations
between $H$ and $G\sm H$ are deleted, we construct a process which, at
each step, takes the next element of the chain $H$ with probability
$\alpha$, and otherwise takes an element from $G\sm H$ according to the
Plancherel measure.  With positive probability, the sequence constructed
is actually a natural extension of $G$: conditioning on this event gives
an order-invariant measure on $G$.  The order-invariant measure we obtain
``favours the first row $H$'', as elements of this row are chosen a
positive proportion of the time in the process, unlike in the Plancherel
measure.

It is easy to see that this can be extended, to obtain processes favouring
more than one row, and/or favouring the low-numbered columns.
Kerov~\cite{Kerov} shows that the extremal order-invariant measures on $G$
are in 1-1 correspondence with pairs of sequences
$\alpha_1 \ge \alpha_2 \ge \cdots \ge 0$,
$\beta_1 \ge \beta_2 \ge \cdots \ge 0$, such that
$\sum_{i=1}^\infty \alpha_i + \sum_{i=1}^\infty \beta_i \le 1$.  (The
measure described above is the one corresponding to $\alpha_1=\alpha$,
with all other $\alpha_i$ and $\beta_i$ equal to zero.)

\section {Open Problems} \label{sec:op}

We finish by mentioning a number of open problems.

\medskip

\noindent
{\bf 1.} Is there some reasonably simple description of the class of
causal sets that admit a (faithful) order-invariant measure?  We see from
consideration of the class of downward-branching trees that there can be
no {\em very} simple description.  However, perhaps Theorem~\ref{tree} may
give some indication of the nature of a possible classification of causal
sets admitting an order-invariant measure.

\medskip

\noindent
{\bf 2.} Is there some reasonably simple description of the class of
causal sets that admit a {\em unique} order-invariant measure?  This seems
likely to be harder than the previous problem.

\medskip

In~\cite{BL1}, we give a description of the general form of any
extremal order-invariant measure on the space $(\Omega,\cF)$.  In order to
extend this to a classification of extremal order-invariant measures, it
would suffice to be able to describe all extremal order-invariant measures
on fixed causal sets.  It is not clear what such a description might look
like, but solving Problems~{\bf 1} and~{\bf 2} would be progress towards
this goal.

\medskip

\noindent
{\bf 3.} One specific problem relates to a partial order obtained by
taking a Poisson process $X$ in ${\mathbb R}^2_+$, and taking the
partial order $<$ on $X$ induced by the co-ordinate order.  The poset
$P=(X,<)$ is a.s.\ a causal set.  Does such a poset (a.s.) admit an
order-invariant measure?

\medskip

If so, it seems that $P$ will (a.s.) admit infinitely many
order-invariant measures, because an order-invariant measure $\mu$ must
have $\sum_{x\in M} \mu(E(x)) > 1-\eps$ for some finite set $M=M(\eps)$ of
minimal elements $x$, whereas the Poisson process itself has no
distinguishable ``minimum region'' of finite area.  This is because the
Lebesgue measure on ${\mathbb R}^2_+$, and hence the Poisson process, are
Lorentz invariant (i.e., invariant under the measure-preserving
transformations $(x,y) \to (ax,a^{-1}y)$ of ${\mathbb R}^2_+$).

The motivation behind this problem comes from physics.  Any process
generating a random causal set can be viewed as a potential discrete model
for the space-time universe.  Rideout and Sorkin~\cite{RS,RS2} proposed
(essentially) order-invariance as a desirable feature of such a model.
It would be good to know whether the (rich) class of order-invariant
processes does include processes that produce outcomes resembling the
observed space-time universe, i.e., at least locally resembling a Poisson
process in 4-dimensional Minkowski space $M^4$.  If such a process exists,
it will have an expression as a mixture of extremal order-invariant
processes on fixed causal sets, where the causal sets ``resemble'' those
produced from a Poisson process.

It seems likely that either (i)~causal sets arising from a Poisson process
in $M^4$ (with an origin) a.s.\ admit an order-invariant measure, or
(ii)~there is some necessary structural condition for the existence of an
order-invariant measure that is not satisfied by any causal set
``faithfully embedded'' into $M^4$.  It would be very interesting to know
which.

The 2-dimensional version of this question, as proposed above, should be
easier to settle.

\medskip

\noindent
{\bf 4.} We give a more specific question, an answer to which is likely to
lead to an answer to Problem~{\bf 3}.  Let $P = (X,<)$ denote the causal set
defined from a Poisson process in the positive quadrant, as above.  For each
$n$, consider the restriction $P_n$ of $P$ to the set $X_n$ of points in the
square $[0,n]^2$.

Now let $x=(u,v)$ be the point in $X$ with minimum sum of co-ordinates
$u+v$.  Consider the probability $q_n = \nu^{X_n}(E^{P_n}(x))$ that $x$ is
the bottom element of a uniform random linear extension of $P_n$.  Does
$q_n$ a.s.\ tend to zero as $n\to \infty$?

\medskip

If $q_n$ does (a.s.) tend to zero, then it should be fairly easy to deduce
that, in any order-invariant measure $\mu$ on $P$, $\mu(E^P(x)) = 0$, and
thence that there is no order-invariant measure on $P$.

On the other hand, if $q_n$ tends to some non-zero limit, and also
$\nu^{X_n}(E^{P_n}(y))$ converges for every other minimal element~$y$, with
the sum of these limits being~1, then it seems very likely that the
measures $\nu^{X_n}$ will have a limit that is an order-invariant measure
on $P$.

The following version of the question seems likely to be equivalent, and
may be slightly more appealing.  If we generate $P_n$ as above, and then
take a random linear extension of $P_n$, does the probability that the
bottom element lies in $[0,1]^2$ tend to zero as $n \to \infty$?

\medskip

\noindent {\bf 5.}
Can one say anything interesting about the causet properties ``$P$ admits
an order-invariant measure'' and ``$P$ admits a unique order-invariant
measure''.  Could one or other be monotone (i.e., preserved under adding
relations)?  The following example shows that the property ``$P$ admits a
faithful order-invariant measure'' is not monotone.

\bigbreak

\noindent {\bf Example 6}. \quad
Let $P=(Z,<)$ consist of two chains $B: b_1<b_2< \cdots$ and
$C: c_0<c_1<c_2<\cdots$, with also the `cross-relations' $c_i > b_j$ if
$j < 2^i$ -- so each element $c_i$ has $2^i-1$ elements of $B$ below it.
This causal set $P$ is obtained from the one in Example~1, which does
admit a faithful order-invariant measure, by adding relations.  We shall
show that, in any order-invariant measure $\mu$ on $P$, $c_0$, and hence
all the elements of $C$, are absent.

For $n \ge 1$, let $X$ be any down-set of $P$ of size $2^n$ containing
$c_0$.  Thus $c_n \notin X$, and so $X$ contains at most $n$ elements of
$C$.  If $Q=(Y,<')$ is the poset with $Y = X$ consisting of the union of
the two chains $C\cap X$ and $B\cap X$, without the cross-relations, then
$\nu^Y(E^Q(c_0))= |C\cap X|/|X| \le n/2^n$.  Now the theorem of Graham,
Yao and Yao~\cite{GYY} implies that adding the cross-relations (which
means conditioning on certain events that the $c_j$ are higher than the
$b_i$) cannot increase the probability that $c_0$ is below $b_1$: thus
$$
\nu^X (E^{P_X}(c_0))\le \nu^Y (E^Q(c_0)) \le \frac{n}{2^n}.
$$
As in the proof of Proposition~\ref{maximal-absent}, this implies
that, in any order-invariant measure $\mu$ on $P$,
$\mu(E^P(c_0)) \le n/2^n$ for every $n$, so $\mu(E^P(c_0)) =0$.  Finally,
by Lemma~\ref{non-zero}, we see that $c_0$, and hence all the $c_i$, are
absent in $\mu$.

\bigbreak

The property of admitting an order-invariant measure is preserved under
the addition of {\em finitely many} relations to $P$: conditioning an
order-invariant measure $\mu$ on the event that a linear extension of $P$
respects those extra relations yields an order-invariant measure on the
causal set with the relations added.

However, we do not know whether the property of admitting an
order-invariant measure is preserved under the {\em removal} of finitely
many relations.


\begin{thebibliography}{99}

\bibitem{AD} D. Aldous and P. Diaconis, Longest increasing subsequences:
from patience sorting to the Baik-Deift-Johansson theorem, {\it Bull.\
Amer.\ Math.\ Soc. \bf 36} (1999) 413--432.

\bibitem{billingsley} P. Billingsley, \emph{Convergence of Probability
Measures}, Wiley, 1999.

\bibitem{Bovier} Anton Bovier, \emph{Statistical Mechanics of Disordered
Systems}, Cambridge University Press, 2006.

\bibitem{Bri1} G.R. Brightwell, Linear extensions of infinite posets,
           {\it Discrete Math. \bf 70} (1988) 113--136.

\bibitem{Bri2} G.R. Brightwell, Semiorders and the 1/3--2/3 conjecture,
{\it Order \bf 5} (1989) 369--380.

\bibitem{Bri3} G.R. Brightwell, Balanced pairs in partial orders,
{\it Discrete Math. \bf 201} (1999) 25--52.

\bibitem{BG} G.R. Brightwell and N. Georgiou, Continuum limits for
classical sequential growth models, {\it Rand.\ Struct.\ Alg. \bf 36}
(2010) 218--250.

\bibitem{BL1} Graham Brightwell and Malwina Luczak, Order-invariant
measures on causal sets, {\it Annals of Applied Probability \bf 21} 
(2011) 1493-1536.


\bibitem{FRT} J.S. Frame, G.de B. Robinson and R.M. Thrall, The hook
graphs of the symmetric group, {\it Canad.\ J.\ Math. \bf 6} (1954)
316--324.

\bibitem{Georgii} Hans-Otto Georgii, {\it Gibbs Measures and Phase
Transitions}, de Gruyter Studies in Mathematics:~9, de Gruyter, 1988.


\bibitem{GK} Alexander Gnedin and Sergei Kerov, The Plancherel measure of
the Young-Fibonacci graph, {\it Math.\ Proc.\ Cambridge Philos.\ Soc.
\bf 129} (2000) 433--446.

\bibitem{GYY} R.L.~Graham, A.C.~Yao and F.F.~Yao,
Some monotonicity properties of partial orders,
\textit{SIAM J.\ Alg.\ Disc.\ Meths.}
\textbf{1} (1980), 251--258.

\bibitem{GrSt} G.R. Grimmett and D. Stirzaker, {\it Probability and
Random Processes}, OUP, 2001.


\bibitem{Kerov} S. Kerov, The boundary of Young lattice and random Young
tableaux, {\it Formal power series and algebraic combinatorics (New
Brunswick, NJ, 1994)} 133--158, DIMACS Ser.\ Discrete Math.\ Theoret.\
Comput.\ Sci. {\bf 24} Amer.\ Math.\ Soc., Providence, RI, 1996.

\bibitem{RS} D.P. Rideout and R.D. Sorkin, Classical sequential growth
dynamics for causal sets, {\it Phys.~Rev.~D (3) \bf 61} (2000) 024002,
16pp.

\bibitem{RS2} D.P. Rideout and R.D. Sorkin, Evidence for a continuum limit
in causal set dynamics, {\it Phys.~Rev.~D (3) \bf 63} (2001) 104011, 15pp.

\bibitem{ross} Sheldon M. Ross, {\em Introduction to Probability Models}
(9th edition), Academic Press, 2007.


\bibitem{VK} A.M. Vershik and S.K.Kerov, Asymptotics of the Plancherel
measure of the symmetric group and the limiting shape of Young tableaux,
{\it Soviet Math.\ Dokl. \bf 18} (1977) 527--531.

\bibitem{VT} A.M. Vershik and N.V. Tsilevich, Markov measures on Young
tableaux and induced representations of the infinite symmetric group
(Russian) {\it Teor.\ Veroyatn.\ Primen. \bf 51} (2006) 47--63;
translation in {\it Theory Probab.\ Appl.\ \bf 51} (2007) 211--223.

\bibitem{williams} David Williams, {\em Probability with Martingales}, Cambridge
University Press, 2007.

\bibitem{Wink} P. Winkler, Correlation and order, {\it Contemporary Mathematics}
{\bf 57} (1986), 151--174.

\end{thebibliography}
\end{document}